\theoremstyle{plain}
\newtheorem{thm}{\textsf{\textbf{Theorem}}}[section]
\newtheorem{lem}[thm]{\textsf{\textbf{Lemma}}}
\newtheorem*{thm*}{\textsf{\textbf{Theorem}}}
\theoremstyle{definition}
\newtheorem{dfn}[thm]{\textbf{\textsf{Definition}}}
\newtheorem{convention}[thm]{\textbf{\textsf{Convention}}}
\newtheorem{rem}[thm]{{\textsf{Remark}}}
\newtheorem{ex}[thm]{{\textsf{Example}}}
\newcommand{\proj}{proj}
\newcommand{\Wee }{\mathcal W}
\newcommand{\nat }{\mathbb N}
\newcommand{\Uee }{\mathcal U}
\DeclareMathOperator{\Ker}{Ker}
\DeclareMathOperator{\im}{Im}
\DeclareMathOperator{\per}{Per}
\DeclareMathOperator{\map}{Map}
\DeclareMathOperator{\iso}{Iso}
\DeclareMathOperator{\id}{id}
\DeclareMathOperator{\Sym}{Sym}
\DeclareMathOperator{\N}{N}
\DeclareMathOperator{\stab}{Stab}
\title{Entropy of Toeplitz systems over residually finite groups}
\author{Przemysław Kucharski\footnote{AGH University of Science and Technology, Faculty of Applied Mathematics, al. Mickiewicza 30, Krak\'{o}w, Poland, e-mail: pkuchars@agh.edu.pl, ORCID iD: \url{https://orcid.org/0000-0002-3826-5827} 
}}
\begin{document}
		\maketitle
\begin{abstract}
	The purpose of this work is to bound sofic topological entropy of Toeplitz systems over residually finite groups and to prove the Krieger Theorem about attaining arbitrary entropy by the Toeplitz systems. To achieve these results, we discuss certain properties of the sofic topological entropy in the context of finitely indexed normal subgroups of the group. It will help us to formulate results almost independently of the natural sofic approximation sequences of residually finite groups.
\end{abstract}

The undeniable fact is that computing sofic entropy appears to be much harder than amenable entropy. It is visible in theorems tackling general properties, as they are exponentially more technical than their classical counterparts, for example compare proofs of the variational principle for the sofic case \cite[Chapter 10.10]{KerrLi} and for the amenable case \cite[Chapter 9.10]{KerrLi}. We can see this phenomenon also during concrete computation, for instance \cite[Prop. 10.28, Prop. 10.29]{KerrLi}. It is probably one of the reasons why there are so few examples of direct computations of sofic topological entropy and most of them are trivial or require additional structure on the space. The purpose of our article is to fill in this gap in the most efficient way, that is by computing sofic topological entropy for large class of dynamical systems - Toeplitz systems defined over residually finite groups, which, as we know, includes non amenable free groups. We will prove in Theorem \ref{krieger-rf}, that for some specific sofic approximation sequences, intimately connected with the group structure, for any given non negative real number $\alpha$ we can find a Toeplitz system with entropy that equals to $\alpha$. In other words, we will prove that the Krieger's Theorem holds for any residually finite group. To the best of our knowledge it will also be the first example of a family of dynamical systems over residually finite group, for which any sofic topological entropy can be attained. The Krieger Theorem for amenable residually finite groups was first proved in 2007 by Fabrice Krieger \cite{KriegerTheorem} by direct construction and then using indirect methods by Martha \L\k{a}cka and Marta Straszak \cite{MM2018}, but it was unknown till now, if it holds for arbitrary residually finite group. We explicitly construct desired Toeplitz system, hence our approach is closer to that of Krieger, although in the case of residually finite groups that are not amenable, one needs to better control each step of the induction. We also establish bounds for the sofic topological entropy in Theorem \ref{main}, which are the best possible in the light of Theorem \ref{toeplitz-example}.

All the main theorems are contained in Section \ref{Entropy of Toeplitz systems} - "Entropy of Toeplitz systems". Sections \ref{Amenable groups and sofic groups} and \ref{sofic topological entropy} are dedicated to definitions of sofic groups and sofic entropy following \cite[Chapter 10]{KerrLi}. On the other hand, Sections \ref{Dependency of entropy on sofic approximation} and \ref{Entropy of subshifts over residually finite group} contain necessary discussion of dependency of entropy on sofic approximation and introduction of key tools in computing entropy of symbolic systems. Sections \ref{Amenable groups and sofic groups} and \ref{sofic topological entropy} are entirely based on \cite[Chapter 9, 10]{KerrLi}. 

\section{Preliminaries}\label{preliminaries}
By an action of the group $G$ on a set $X$ we mean a map $\beta\colon G\times X\to X$ that satisfies the usual conditions
\begin{enumerate}[label=(D\arabic*)]
	\item for every $x\in X$ we have $\beta(e,x)=\id$, where $e$ is the neutral element of $G$
	\item for every $s,t\in G$ we have $\beta(t,\beta(s,x))=\beta(ts,x)$.
\end{enumerate} A set $X$ with an action of $G$ is said to be a $G$-set. For brevity, we will usually write $sx$ instead of $\beta(s,x)$ and the function $\beta$ will not be named. If additionally $X$ is a topological space and $G$ is a topological group and the action $\beta$ is continuous, we will call the pair $(X,G)$ a dynamical system. Note that if $\beta$ is continuous and $X$ is compact, then $\beta(s,\cdot)\colon X\to X$ is automatically a homeomorphism for any $s\in G$. A pair $(Y,G)$ is a subsystem of $(X,G)$ if $Y$ is a closed invariant subset of $X$, that is $gY\subseteq Y$ for every $g\in G$. Below we present one of the most classical examples of a dynamical system. For simplicity of notation we denote by $k$ the set $\{1,\ldots,k\}$ for any $k\in\nat=\{1,2,3,...\}$.

Clearly, if $G$ is a discrete topological group, then $\beta$ is continuous iff $\beta(s,\cdot)\colon X\to X$ is continuous for any $s\in G$.

\begin{dfn}\label{factor-def}
	A homomorphism of $G$-sets $U$ and $V$ is a mapping $\chi\colon U\to V$ that commutes with the action of $G$, that is $g\chi(u)=\chi(gu)$ for every $g\in G$ and $u\in U$. A surjective homomorphism will be called a factor map. If the sets $U$ and $V$ are topological spaces, then $\chi$ is additionally required to be continuous. If $\chi$ is a bijection, then it will called an isomorphism of $G$-sets.
\end{dfn}
\begin{convention}
	To avoid any misunderstandings, we adopt the following convention for suprema and infima of subsets of $\mathbb{R}$
	$$
	\inf \emptyset =\infty~~\text{ and }~~\sup\emptyset=-\infty.
	$$
\end{convention}
On the product of topological spaces we will always consider a product topology.
\begin{lem}\label{k^G-dyn-sys}
	A pair $(k^{G},G)$ together with the action defined by $(gx)_{h}=x_{g^{-1}h}$ is a dynamical system.
\end{lem}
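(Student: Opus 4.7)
The plan is to verify in turn the three requirements in the definition of a dynamical system: that the formula $(gx)_{h}=x_{g^{-1}h}$ defines a genuine action (axioms (D1) and (D2)), that $k^{G}$ carries a sensible topology, and that the action is continuous. Since the formula is defined coordinate by coordinate, each check reduces to an elementary computation on indices.

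For (D1) I would just compute $(ex)_{h}=x_{e^{-1}h}=x_{h}$ for every $h\in G$, so $ex=x$. For (D2) I would unwind the definitions on both sides: on one hand $(t(sx))_{h}=(sx)_{t^{-1}h}=x_{s^{-1}t^{-1}h}$, and on the other $((ts)x)_{h}=x_{(ts)^{-1}h}=x_{s^{-1}t^{-1}h}$. The use of the inverse in the formula is precisely what makes the two sides match, which is the one place one has to be careful.

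Next I would point out that $k$ is regarded with the discrete topology, so $k^{G}$ inherits the product topology, and $G$ is a discrete topological group in the context of the paper (the whole article concerns discrete residually finite groups). By the remark stated immediately before the lemma, continuity of the action reduces to continuity of $x\mapsto gx$ for each fixed $g\in G$. Since the product topology on $k^{G}$ is the initial topology with respect to the coordinate projections $\pi_{h}\colon k^{G}\to k$, it suffices to verify that each composition $\pi_{h}\circ(g\,\cdot)$ is continuous; but this composition is exactly $\pi_{g^{-1}h}$, which is continuous by definition of the product topology. This finishes all three verifications.

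There is no real obstacle: the lemma is a sanity check introducing the shift action, and the only point worth flagging is the order of multiplication in (D2), where the presence of $g^{-1}$ in the definition is essential for the action to be a left action rather than a right action.
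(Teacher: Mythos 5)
Your proof is correct. The paper states this lemma without proof (treating it as a routine verification), and your argument --- checking (D1) and (D2) by unwinding indices, then reducing continuity of $x\mapsto gx$ to the identity $\pi_{h}\circ(g\,\cdot)=\pi_{g^{-1}h}$ in the product topology --- is exactly the standard computation the author implicitly has in mind; in particular your observation that the inverse in $x_{g^{-1}h}$ is what makes the shift a left action is the one genuinely non-automatic point, and you handle it correctly.
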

\begin{dfn}
	We will call $(k^{G},G)$ a full $G$-shift with base $k$. Any subsystem of $(k^{G},G)$ will be called a $G$-shift with base $k$ or just a $G$-shift, when no confusion can arise. 
\end{dfn}
\section{Amenable groups and sofic groups}\label{Amenable groups and sofic groups}
	As we have already noted, there are significant differences between the amenable entropy theory and the sofic entropy. The primary reason behind this phenomenon is existence of F\o{}lner sequences in every amenable group, which provides methods of generalizing classical averaging arguments from the theory of $\mathbb{Z}$-action to the setting of amenable groups. Therefore, amenable groups are those for which some kind of a mean can be defined. As our main purpose is to develop tools in the setting of sofic groups, we present definitions and examples for both classes of groups for the sake of comparison.
	\begin{dfn}\label{amenabledfn}
		Let $G$ be a countable group. A sequence $\{F_{n}\}_{n\in\nat}$ of finite nonempty subsets of $G$ is \emph{a F\o{}lner sequence} if for every element $s\in G$ we have 
		\begin{equation}\label{amenable}
		\lim_{n\to\infty}\frac{|sF_{n}\Delta F_{n}|}{|F_{n}|}=0.
		\end{equation}
		Group $G$ is \emph{amenable} if it admits a F\o{}lner sequence. 
	\end{dfn}
	\begin{ex}
		For any $k\in\nat$, the group $\mathbb{Z}^{k}$ is amenable. We define its F\o{}lner sequence to be the one consisting of cubes increasing in diameter, that is $F_{n}=[-n,n]^{k}$, for $ n=1,2,... $. It is easy to check that $\{F_{n}\}_{n\in\nat}$ indeed satisfies condition \eqref{amenable}.
	\end{ex}
	\begin{dfn}
		Let $G$ be a countable group and $F$ be its finite subset. Fix $k\in\nat$. For a $G$-shift $X\subset k^{G}$, we define a set of all words based on $F$ by the formula
		\[ 
		\mathcal{B}_{F}(X):=\{x\in k^{F}~|~x=y|_{F}\text{ for some }y\in X\}.
		 \]
	\end{dfn}
	\begin{dfn}
		For an amenable group $G$, positive natural number $k$ and a $G$-shift $X\subset k^{G}$ we define the entropy of $(X,G)$ to be the limit
		\begin{equation}\label{amenable-entropy}
		h(X,G)=\lim_{n\to\infty}\dfrac{1}{|F_{n}|}\log |\mathcal{B}_{F_{n}}(X)|.
		\end{equation}
		It can be proved that the above limit always exists and the entropy defined in this way coincides with the classical topological entropy for $\mathbb{Z}$-shifts \cite[Chapter 9]{KerrLi}.
	\end{dfn}
	\begin{convention}
		Let $V$ be an arbitrary set. We denote by $\Sym(V)$ the group of all bijections $V\to V$ with multiplication given by composition.
	\end{convention}
	\begin{dfn}\label{sofic-def}
		We call a countable group $G$ \emph{sofic} if there exist sequences $\{V_n\}_{n=1}^{\infty}$ of finite sets and $\Sigma=\{\sigma_{n}\colon G\to \Sym(V_n)\}_{n=1}^{\infty}$ of mappings, such that $\Sigma$ is asymptotically multiplicative and asymptotically free, meaning that
	\begin{enumerate}[label=(S\arabic*)]
		\item\label{S1}$\lim_{n\to\infty} |\{v \in V_{n} : \sigma _{n,st} (v) = \sigma _{n,s} \sigma _{n,t} (v)\}| /|V_{n}| = 1$ for all $s, t \in G$, and
		\item\label{S2} $\lim_{n\to\infty} |\{v \in V_{n} : \sigma _{n,t} (v) \neq \sigma _{n,s}(v) \}| /|V_{n}| = 1$ for all distinct $s, t \in G$,
	\end{enumerate} 
	where $\sigma_{n,s}$ denotes the image of a group element $s$ under $\sigma_{n}$. Such a sequence $\Sigma $ is called \emph{a sofic sequence} or a \emph{a sofic approximation}. If mappings $\sigma_{n}$ are group homomorphisms we call $\Sigma$ \emph{a sofic approximation by homomorphisms}.
	\end{dfn}
	\begin{rem}
		Every amenable group $G$ is also a sofic group. If $\{F_{n}\}_{n\in\nat}$ is a F\o{}lner sequence for $G$, define $\sigma_{n}\colon G\to\Sym(F_{n})$ by 
		$$
		\sigma_{n,s}(f)=
		\left\{ \begin{array}{ll}
		sf & \textrm{ if }f\in F_{n}\cap s^{-1}F_{n},\\
		\alpha_{n}(f) & \textrm{ otherwise, }
		\end{array} \right.
		$$ where $\alpha_{n}\colon F_{n}\setminus s^{-1}F_{n}\to F_{n}\setminus sF_{n}$ is any bijection. Since $|F_{n}\setminus sF_{n}|\to 0$, the mappings $\sigma_{n}$ will satisfy conditions  \ref{S1} and \ref{S2}.
	\end{rem}
\begin{dfn}[see \cite{geom-group-theory}]	
	Let $S$ be a set. The group $(\langle S\rangle,\cdot)$, often abbreviated to $\langle S\rangle$, will be called \emph{a free group generated by} $S$. 
\end{dfn}
\begin{dfn}
	Group $G$ will be called \emph{a residually finite group} if there exists a decreasing sequence $H_{1}\supset H_{2}\supset\ldots$ of subgroups of finite index with trivial intersection $\bigcap_{n\in\mathbb{N}}H_{n}=\{e\}$. A decreasing sequence of subgroups with trivial intersection will be shortly denoted by $H_{n}\searrow \{e\}$.
	Additionally, a sequence of fundamental domains $\{F_{n}\}_{n\in\nat}$ corresponding to  $\{G/H_{n}\}_{n\in\nat}$ will be called a telescoping sequence of fundamental domains if $
	F_{n+1}=(F_{n+1}\cap H_{n})F_{n}$ and $e\in F_{n}$ for every $n\in\nat$.
\end{dfn}
\begin{rem}
	Note that if $H_{n}\searrow \{e\}$ for a group $G$, then we can also find a sequence $\{K_{n}\}_{n\in\nat}$ of normal subgroups of $G$ of finite index such that $K_{n}\searrow \{e\}$. Indeed, define $K_{n}=\bigcap_{g\in G}g^{-1}H_{n}g$. Clearly, every $K_{n}$ is a subgroup of $G$. The number of conjugacy classes of any finite index subgroup, say $K\subseteq G$, is less than its index and in fact is equal to the index of normalizer, which contains $K$ as a subgroup. Consequently, it follows from the inequality $[G:H\cap K]\leq [G:K][G:H]$ holding for any subgroups $H,K\subseteq G $, that $K_{n}$ is a finitely indexed normal subgroup with $K_{n}\searrow \{e\}$.
\end{rem}
\begin{ex}\label{example-res-fin}
 	Let $G$ be a countable residually finite group. Therefore there exists a decreasing sequence of subgroups of finite index $\{H_n\}_{n=1}^{\infty}$ intersecting on the neutral element. We define the sequence $\Sigma=\{\sigma_{n}\colon G\to \Sym(G/H_n)\}_{n\in\nat}$, where $G/H_n=\{gH_{n}\}_{g\in G}$, by $\sigma_{n,g}(cH_n):=cg^{-1}H_n$, for $c,g\in G$. It is easy to check that $\Sigma$ is indeed a sofic sequence for a group $G$. We will usually call it the sofic sequence with natural action on cosets of $H_{n}$. Note that we have not assumed $\{H_{n}\}$ is a sequence of normal subgroups. 
 \end{ex}
 \section{Sofic topological entropy}\label{sofic topological entropy}
	We now proceed with the definition of sofic topological entropy or briefly sofic entropy. Let $(X,G)$ be a compact metrizable dynamical system and $d$ be a continuous pseudometric on $X$. By compactness of $X$ we can assume that $d(x,y)\leq 1$ for any $x,y\in X$. From now on, if not stated otherwise, $\Sigma=\{\sigma_{n}\colon G\to \Sym(V_{n})\}_{n\in\nat}$ will be a sofic approximation of a group $G$. All the definitions presented in this section can be found in \cite[Chapter 10]{KerrLi}.
	\begin{dfn}
	For a finite set $V$ we define pseudometrics $d_{2} $ and $d_{\infty}$ on the set of all maps $V \to X$ by
	$$d_{2}(\varphi,\psi)=\Big( \frac{1}{|V|}\sum_{v\in V}(d(\varphi(v),\psi(v))^{2}\Big)^{1/2},$$
	$$d_{\infty}(\varphi, \psi)=\max_{v\in V}d(\varphi(v),\psi(v)).$$	
	\end{dfn}
	\begin{dfn}
	Let $F$ be a finite subset of $G$, $\delta>0$, and let $\sigma\colon G\to\Sym(V)$ for some finite set $V$. We define $\map(d, F, \delta,\sigma)$ to be the set of all maps $\varphi \colon V\to X$ such that $d_{2}(\varphi\sigma_{s},\alpha_{s}\varphi)\leq \delta$ for all $s\in F$, where $\alpha_{s}$ denotes the transformation $x\mapsto sx$ of $X$.
	\end{dfn}
	\begin{dfn}
	Given a pseudometric $d$ on a set $Y$ we write $\N_{\epsilon}(Y, d)$ for the maximum cardinality of a subset $E$ of $Y$ which is $(d, \epsilon)$-seperated in the sense that $d(y,z) >\epsilon$ for all distinct $y,z\in E$.
	\end{dfn}
	\begin{dfn}\label{entropy-def1}
	For a continuous pseudometric $d$ on $X$ we set
	$$
	h_{\Sigma}(d)=\sup_{\epsilon>0}\inf_{F\subset G}\inf_{\delta>0}\limsup_{n\to\infty} \frac{1}{|V_{n}|} \log \N_{\epsilon}(\map(d, F,\delta,\sigma_{n}),d_{\infty}),
	$$
	where the first infimum is over all finite sets $F\subset G$. We set $h_{\Sigma}(d)=-\infty$ if $\map(d, F,\delta,\sigma_{n})=\emptyset$ for all $n\in\nat$ big enough.
	\end{dfn}
		It turns out that instead of measuring pseudoorbits separation in terms of $d_{\infty}$ pseudometric  we can use $d_{2}$ pseudoometric.
	\begin{thm}{\cite[Prop. 10.23]{KerrLi}}\label{KerrLi}\label{d2-dinfty}
		If $d$ is a continuous pseudometric on $X$, then
		\begin{equation}\label{eq1}
		h_{\Sigma}(d)=\sup_{\epsilon>0}\inf_{F}\inf_{\delta}\limsup_{n\to\infty} \frac{1}{|V_{n}|} \log \N_{\epsilon}(\map(d, F,\delta,\sigma_{n}),d_{2}),
		\end{equation}
		where limits are taken as in Definition \ref{entropy-def1}.
	\end{thm}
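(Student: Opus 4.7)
The plan is to verify both inequalities between the right-hand side of \eqref{eq1} and the defining expression for $h_{\Sigma}(d)$. For the inequality in which the right-hand side is bounded above, observe that on any finite set $V$ the average of squares is bounded by the maximal square, so $d_2\leq d_\infty$ pointwise on maps $V\to X$; hence every $(d_2,\epsilon)$-separated set is $(d_\infty,\epsilon)$-separated and
\[
\N_\epsilon(\map(d,F,\delta,\sigma_n),d_2)\leq \N_\epsilon(\map(d,F,\delta,\sigma_n),d_\infty).
\]
This estimate propagates through $\limsup_n$, $\inf_F$, $\inf_\delta$ and $\sup_\epsilon$ without any change of parameter.

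The nontrivial direction requires more work. The central claim I aim to prove is that for every $\epsilon>0$ and every $\gamma>0$ there exists $\epsilon'>0$ such that
\[
\N_\epsilon(\map(d,F,\delta,\sigma_n),d_\infty)\leq \N_{\epsilon'}(\map(d,F,\delta,\sigma_n),d_2)\cdot e^{\gamma|V_n|}
\]
for every finite $F\subseteq G$, every $\delta>0$ and every sufficiently large $n$. Taking logarithms, dividing by $|V_n|$, passing to $\limsup_n$, $\inf_F$, $\inf_\delta$ and $\sup_\epsilon$, and finally letting $\gamma\to 0^{+}$, this yields the opposite inequality, completing the proof.

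To prove the central claim, fix $\eta\in(0,\epsilon/2)$ and choose a maximal $\eta$-separated set $\{x_1,\dots,x_N\}\subseteq X$, which by compactness of $X$ is automatically an $\eta$-net. Fix $\lambda\in(0,\eta/2)$, and for $\epsilon'>0$ set $\mu:=\epsilon'^{2}/\lambda^{2}$. Take a maximal $(d_2,\epsilon')$-separated subset $\Fee\subseteq \map(d,F,\delta,\sigma_n)$; by maximality, the $d_2$-balls of radius $\epsilon'$ centred at the elements of $\Fee$ cover $\map(d,F,\delta,\sigma_n)$, and $|\Fee|=\N_{\epsilon'}(\map(d,F,\delta,\sigma_n),d_2)$. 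For a fixed centre $\psi\in\Fee$ and any $\varphi$ in its ball, Markov's inequality gives $|L_\varphi|\leq\mu|V_n|$, where $L_\varphi:=\{v\in V_n:d(\varphi(v),\psi(v))>\lambda\}$. Encode $\varphi$ by the pair $(L_\varphi,\iota_\varphi)$, where $\iota_\varphi\colon L_\varphi\to\{1,\dots,N\}$ picks for each $v$ some index $i$ with $d(\varphi(v),x_i)\leq\eta$. Two maps $\varphi,\varphi'$ in the same ball having identical encodings satisfy $d(\varphi(v),\varphi'(v))\leq 2\eta$ on $L_\varphi=L_{\varphi'}$ (both within $\eta$ of the common $x_{\iota_\varphi(v)}$) and $d(\varphi(v),\varphi'(v))\leq 2\lambda<\eta$ on the complement (both within $\lambda$ of $\psi(v)$); hence $d_\infty(\varphi,\varphi')\leq 2\eta<\epsilon$ and any $(d_\infty,\epsilon)$-separated subset of the ball contains at most one element per encoding. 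The number of admissible encodings is bounded by
\[
\sum_{k=0}^{\lfloor\mu|V_n|\rfloor}\binom{|V_n|}{k}N^{k}\leq\exp\!\bigl(|V_n|(H(\mu)+\mu\log N)+o(|V_n|)\bigr),
\]
where $H(\mu):=-\mu\log\mu-(1-\mu)\log(1-\mu)$, by Stirling. Since $H(\mu)+\mu\log N\to 0^{+}$ as $\mu\to 0^{+}$, and $\mu$ can be made arbitrarily small by shrinking $\epsilon'$ while $\eta,\lambda$ are kept fixed, summing over the $|\Fee|$ balls yields the central claim with $\gamma=H(\mu)+\mu\log N+o(1)$.

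The main obstacle is the combinatorial bookkeeping: one must simultaneously arrange $2\eta<\epsilon$ (so that the encoding determines $\varphi$ up to $d_\infty$-distance strictly less than $\epsilon$), $2\lambda<\eta$ (so that deviations of $\varphi$ outside $L_\varphi$ are negligible for $d_\infty$-separation) and $\mu=\epsilon'^{2}/\lambda^{2}$ small enough that the entropy overhead $H(\mu)+\mu\log N$ is subexponential in $|V_n|$. The freedom to pick $\epsilon'$ independently of $\eta$ and $\lambda$ after the latter are fixed in terms of $\epsilon$ is what guarantees that the loss factor $e^{\gamma|V_n|}$ is genuinely subexponential and uniform in $F$, $\delta$ and $n$.
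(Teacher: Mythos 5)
Your proof is correct: the easy direction via $d_2\leq d_\infty$ and, for the reverse inequality, the covering of $\map(d,F,\delta,\sigma_n)$ by $d_2$-balls combined with the Chebyshev/spanning-set encoding argument and the subexponential count $\sum_{k\leq\mu|V_n|}\binom{|V_n|}{k}N^{k}$ is exactly the standard argument. The paper itself offers no proof of this statement (it only cites Proposition 10.23 of Kerr--Li), and your argument reproduces that source's proof in essentially the same way.
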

	In general, the value $h_{\Sigma}(d)$ depends on the pseudometric $d$, but in the case when $d$ is \emph{dynamically generating}, that is for every distinct $x,y \in X$ there exists $s\in G$ with $d(sx,sy)>0$, then $h_{\Sigma}(d)$ coincides with $h_{\Sigma}(d')$ for any other continuous dynamically generating pseudometric $d'$.
	\begin{dfn}
		We define the \emph{sofic topological entropy} of a dynamical system $(X,G)$ with respect to $\Sigma$, often simplified to \emph{sofic entropy} of a dynamical system $(X,G)$ with respect to $\Sigma$, as the common value of $h_{\Sigma}(d)$ over all dynamically generating continuous pseudometrics $d$ on $X$.
	\end{dfn}
\section{Dependency of entropy on sofic approximation}\label{Dependency of entropy on sofic approximation}
In case of non amenable groups, it is very hard to compute sofic entropy with respect to arbitrary sofic approximation. There are also very few examples of computed entropy of non amenable group actions, where the sofic approximation is not explicitly known. This is because it is hard to determine how in those cases sofic sequence approximates group action. Despite all of that there are some results concerning dependency of the sofic entropy on sofic approximation. In a case, where two sofic approximations give the same entropy, we will call them equivalent. It seems that the most common, or maybe even the only, approach to the problem of distinguishing equivalent sofic approximations is that funded on edit distance \cite[Paragraph 2.2.4.]{Bowen2019}. But we will not follow it, since we believe that the one presented below is more suitable for formal proofs, that do not appeal to the expert's intuition, and above all our approach seems to have potential to be a funding ground for better tools than edit distance.
For any set $V$, we denote by $\amalg_{k}V$ the disjoint sum of $k$ copies of $V$, that is $\amalg_{k}V=(\{1\}\times V)\cup...\cup(\{k\}\times V)$. In this notation, let $\iota\colon\amalg_{k}V\to k $ be the projection on the first variable and $\kappa\colon\amalg_{k}V\to V $ be the projection on the second variable. In the same way we can define the disjoint sum of sets $V_{1},...,V_{l}$, for some $l\in\nat$, that is $\amalg_{i=1}^{l}V_{i}=(\{1\}\times V_{i})\cup...\cup(\{k\}\times V_{i})$.
\begin{lem}\label{lem-amalg}
	If $\Sigma=\{\sigma_{n}\colon G\to\Sym(V_{n})\}$ is a sofic approximation and $(a_{n})_{n\in\mathbb{N}}$ is a sequence in $\nat$, then $\tilde{\Sigma}=\{\tilde{\sigma}_{n}\colon G\to\Sym(\amalg_{a_{n}}V_{n})\}$ is a sofic approximation, where
	$$
	\tilde{\sigma}_{n}(g)w=\sigma_{n}(g)\kappa(w)\text{, for any }w\in \amalg_{a_{n}} V_{n}.
	$$
\end{lem}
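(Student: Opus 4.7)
My plan is to unpack the definition of $\tilde{\sigma}_n$ and then observe that the two sofic conditions for $\tilde{\Sigma}$ reduce, copy-by-copy, to the corresponding conditions for $\Sigma$. First I would make explicit that $\tilde{\sigma}_n(g)$ acts diagonally on the disjoint union by preserving the first coordinate: for $w=(i,v)\in\amalg_{a_n}V_n$, one has $\tilde{\sigma}_n(g)(i,v)=(i,\sigma_n(g)v)$. (This is the only interpretation consistent with the formula $\tilde{\sigma}_n(g)w=\sigma_n(g)\kappa(w)$ that lands back in $\amalg_{a_n}V_n$.) In particular $\tilde{\sigma}_n(g)$ is a bijection because $\sigma_n(g)$ is.

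Next, for the multiplicativity condition \ref{S1}, fix $s,t\in G$ and compute for $w=(i,v)$:
\[
\tilde{\sigma}_{n,st}(w)=(i,\sigma_{n,st}(v)),\qquad \tilde{\sigma}_{n,s}\tilde{\sigma}_{n,t}(w)=(i,\sigma_{n,s}\sigma_{n,t}(v)).
\]
Hence $\tilde{\sigma}_{n,st}(w)=\tilde{\sigma}_{n,s}\tilde{\sigma}_{n,t}(w)$ if and only if $\sigma_{n,st}(v)=\sigma_{n,s}\sigma_{n,t}(v)$. Summing over the $a_n$ copies,
\[
\bigl|\{w\in\amalg_{a_n}V_n:\tilde{\sigma}_{n,st}(w)=\tilde{\sigma}_{n,s}\tilde{\sigma}_{n,t}(w)\}\bigr|=a_n\cdot\bigl|\{v\in V_n:\sigma_{n,st}(v)=\sigma_{n,s}\sigma_{n,t}(v)\}\bigr|,
\]
and since $|\amalg_{a_n}V_n|=a_n|V_n|$, the ratio appearing in \ref{S1} for $\tilde{\Sigma}$ coincides exactly with the one for $\Sigma$, so its limit is $1$.

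The free action condition \ref{S2} is handled identically: for distinct $s,t\in G$ and $w=(i,v)$ we have $\tilde{\sigma}_{n,t}(w)\neq\tilde{\sigma}_{n,s}(w)$ iff $\sigma_{n,t}(v)\neq\sigma_{n,s}(v)$, and the same counting argument shows that the two ratios agree, so \ref{S2} for $\tilde{\Sigma}$ follows from \ref{S2} for $\Sigma$.

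The argument is essentially a bookkeeping one and I do not anticipate a real obstacle; the only thing to be careful about is the (slightly abbreviated) definition of $\tilde{\sigma}_n$ in the statement, which must be read as preserving the $\iota$-coordinate so that $\tilde{\sigma}_n(g)$ actually lies in $\Sym(\amalg_{a_n}V_n)$.
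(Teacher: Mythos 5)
Your proposal is correct and follows essentially the same route as the paper: both arguments observe that the relevant subset of $\amalg_{a_n}V_n$ is exactly $a_n$ copies of the corresponding subset of $V_n$, so the ratios in \ref{S1} and \ref{S2} are unchanged. Your explicit remark that $\tilde{\sigma}_n(g)$ must preserve the $\iota$-coordinate (and is therefore a bijection) is a small clarification the paper leaves implicit, but the substance is identical.
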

\begin{proof}
	To check asymptotic freeness, choose some $\epsilon>0$ and $h,g\in G$, with  $h\neq g$. Take $N\in \mathbb{N}$ big enough such that for $k>N$ we have
	$$
	\frac{1}{|V_{k}|}|\{v\in V_{k}\colon \sigma_{k}(g)v\neq \sigma_{k}(h)v\}|\geq1-\epsilon.
	$$
	It is not hard to see that
	
	\[ 	|\{w\in \amalg_{a_{k}}V_{k}\colon \tilde{\sigma}_{k}(g)w\neq \tilde{\sigma}_{k}(h)w\}|=
	a_{n}|\{v\in V_{k}\colon \sigma_{k}(g)v\neq \sigma_{k}(h)v\}|\geq (1-\epsilon)a_{k}|V_{k}|, \]
	so dividing by $|\amalg_{a_{k}} V_{k}|=a_{k}|V_{k}|$ we obtain
	\[ 
	\frac{1}{|\amalg_{a_{k}} V_{k}|}|\{w\in \amalg_{a_{k}} V_{k}\colon \tilde{\sigma}_{k}(g)w\neq \tilde{\sigma}_{k}(h)w\}|\geq 1-\epsilon.
	\]Similarly, we prove asymptotic multiplicativeness.
\end{proof}
\begin{dfn}
	Let $U,V$ be finite sets with discrete metric such that $|U|=|V|$. Let $\sigma^{\text{\tiny U}}\colon G\to \Sym(U)$ and $\sigma^{\text{\tiny V}}\colon G\to \Sym(V)$ be some mappings. Fix $\delta>0$ and a finite subset $F\subset G$. We call a map $\phi\colon U\to V$ an $(F,\delta)$-isomorphism, if $d_{2}(\phi\sigma^{\text{\tiny U}}(f),\sigma^{\text{\tiny U}}(f)\phi)<\delta$ for every $f\in F$ and there are sets $\bar{U}\subset U$ and $\bar{V}\subset V$ with $\min\{|\bar{U}|/|U|,|\bar{V}|/|V|\}>1-\delta$ such that the restriction of $\phi$ to $\bar{U}$ is a bijection onto $\bar{V}$. We write $\bar{\phi}\colon \bar{U}\to\bar{V}$ for this map. The set of all $(F,\delta)$-isomorphisms $U\to V$ will be called $\iso(F,\delta,\sigma^{\text{\tiny U}},\sigma^{\text{\tiny V}})$.
\end{dfn}
\begin{dfn}\label{entropy-of-sofic-approximation}
	Let $\Sigma^{^{\text{\tiny V}}}=\{\sigma^{\text{\tiny V}}_{n}\colon G\to\Sym(V_{n})\}$ and $\Sigma^{\text{\tiny U}}=\{\sigma^{\text{\tiny U}}_{n}\colon G\to\Sym(U_{n})\}$ be sofic approximations with $|V_{n}|=|U_{n}|$ for every $n\in\nat$, we define the entropy of $\Sigma^{\text{\tiny V}}$ with respect to $\Sigma^{\text{\tiny U}}$ by
	$$
	h_{\Sigma^{\text{\tiny U}}}(\Sigma^{\text{\tiny V}})=\sup_{\epsilon>0}\inf_{F\subset G}\inf_{\delta>0}\limsup_{n\to\infty}\dfrac{\log N_{\epsilon}(\iso(F,\delta,\sigma_{n}^{\text{\tiny U}},\sigma_{n}^{\text{\tiny V}}),d_{2})}{|U_{n}|}.
	$$ We set $h_{\Sigma^{\text{\tiny U}}}(\Sigma^{\text{\tiny V}})=-\infty$ if there exists an $\epsilon>0$ such that for every finite $F\subset G$ and $\delta>0$ we have $\iso(F,\delta,\sigma_{n}^{\text{\tiny U}},\sigma_{n}^{\text{\tiny V}})=\emptyset$ for infinitely many $n\in\nat$.
\end{dfn}
For the sake of clarity, in the following proofs we will often abbreviate $U=U_{n}$, $V=V_{n}$ and $\sigma^{\text{\tiny V}}=\sigma^{\text{\tiny V}}_{n}$, $\sigma^{\text{\tiny U}}=\sigma^{\text{\tiny U}}_{n}$. Let $\Sigma^{\text{\tiny V}}$ and $\Sigma^{\text{\tiny U}}$ be sofic approximations, from now on we assume that $|U_{n}|=|V_{n}|$ for every $n\in\nat$.
\begin{lem}\label{lem-symmetry}
	For every sofic approximations $\Sigma^{\text{\tiny V}}=\{\sigma^{\text{\tiny V}}_{n}\colon G\to\Sym(V_{n})\}$ and $\Sigma^{\text{\tiny U}}=\{\sigma^{\text{\tiny U}}_{n}\colon G\to\Sym(U_{n})\}$, we have $$h_{\Sigma^{\text{\tiny U}}}(\Sigma^{\text{\tiny V}})=h_{\Sigma^{\text{\tiny V}}}(\Sigma^{\text{\tiny U}}).$$ Denote their common value by $h(\Sigma^{\text{\tiny U}},\Sigma^{\text{\tiny V}}).$
\end{lem}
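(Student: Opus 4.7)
The plan is to prove $h_{\Sigma^{\text{\tiny U}}}(\Sigma^{\text{\tiny V}})\le h_{\Sigma^{\text{\tiny V}}}(\Sigma^{\text{\tiny U}})$; the reverse inequality then follows by running the same argument with the roles of $\Sigma^{\text{\tiny U}}$ and $\Sigma^{\text{\tiny V}}$ swapped. The natural construction is: given $\phi\in\iso(F,\delta,\sigma^{\text{\tiny U}}_n,\sigma^{\text{\tiny V}}_n)$ together with witnessing sets $\bar U\subset U_n$ and $\bar V\subset V_n$, define $\psi\colon V_n\to U_n$ by $\psi|_{\bar V}:=\bar\phi^{-1}$ and an arbitrary extension on $V_n\setminus\bar V$. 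The bijectivity clause for $\psi$ holds with the same witnessing sets (in swapped roles), so it remains to show that $\psi$ is approximately equivariant with a constant $\delta'=\delta'(\delta)\to 0$ as $\delta\to 0$, and that the assignment $\phi\mapsto\psi$ nearly preserves $d_2$-distances.

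For approximate equivariance of $\psi$ at $f\in F$, the bad set $\{v: \psi(\sigma^{\text{\tiny V}}_n(f)v)\neq\sigma^{\text{\tiny U}}_n(f)\psi(v)\}$ splits into three pieces: points where $v$ or $\sigma^{\text{\tiny V}}_n(f)v$ lies outside $\bar V$ (at most $2\delta|V_n|$, since $\sigma^{\text{\tiny V}}_n(f)$ is a bijection); points $v\in\bar V$ whose preimage $u=\bar\phi^{-1}(v)$ lies in the $d_2$-exceptional set of $\phi$ for $f$ (at most $\delta^2|U_n|$); and points whose preimage satisfies $\sigma^{\text{\tiny U}}_n(f)u\notin\bar U$ (at most $\delta|U_n|$). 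Summing and taking a square root yields $d_2(\psi\sigma^{\text{\tiny V}}_n(f),\sigma^{\text{\tiny U}}_n(f)\psi)=O(\sqrt\delta)$. For distance preservation, if $\phi_1,\phi_2\mapsto\psi_1,\psi_2$, let $W\subset U_n$ be the set of $u$ lying in $\bar U_1\cap\bar U_2$ and sent by both $\phi_i$ into $\bar V_1\cap\bar V_2$; a direct count gives $|U_n\setminus W|=O(\delta|U_n|)$, and on $W$ the restrictions of the $\phi_i$ set up a measure-preserving matching between the disagreement set of $\phi_1,\phi_2$ and that of $\psi_1,\psi_2$, whence $|d_2(\phi_1,\phi_2)^2-d_2(\psi_1,\psi_2)^2|=O(\delta)$.

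For $\delta$ small compared with $\epsilon^2$, these two estimates force $\phi\mapsto\psi$ to be injective on every $(d_2,\epsilon)$-separated subset of $\iso(F,\delta,\sigma^{\text{\tiny U}}_n,\sigma^{\text{\tiny V}}_n)$ and to carry it into a $(d_2,\epsilon/\sqrt 2)$-separated subset of $\iso(F,\delta',\sigma^{\text{\tiny V}}_n,\sigma^{\text{\tiny U}}_n)$. Using $|U_n|=|V_n|$, dividing by $|U_n|$, taking $\limsup_n$, and then $\inf_{F,\delta}$ on the left converts, because $\delta'(\delta)\to 0$ and $\limsup_n$ is monotone in the iso-parameter, into an upper bound by $\inf_{F,\delta'}$ on the right; taking $\sup_\epsilon$ and relabeling $\epsilon/\sqrt 2$ as a fresh parameter yields $h_{\Sigma^{\text{\tiny U}}}(\Sigma^{\text{\tiny V}})\le h_{\Sigma^{\text{\tiny V}}}(\Sigma^{\text{\tiny U}})$. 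The main obstacle will be the bookkeeping in the previous paragraph: several exceptional sets of different orders ($\delta$, $\delta^2$, $2\delta$) have to be combined so that $\delta'\to 0$ with $\delta$ and the $O(\delta)$ distance distortion stays small compared with $\epsilon^2$, leaving enough slack for the final quantifier juggling to go through without losing any entropy.
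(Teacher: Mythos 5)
Your proposal is correct and follows the same overall route as the paper's proof: invert $\phi$ on its bijective part $\bar U\to\bar V$, extend arbitrarily to $\psi\colon V\to U$, check that $\psi$ is an $(F,\delta')$-isomorphism with $\delta'(\delta)\to 0$, and check that inversion nearly preserves $d_2$-distances so that $(\epsilon,d_2)$-separated sets map to $(\epsilon',d_2)$-separated sets. The one genuine local difference is in the equivariance step: the paper enlarges $F$ to $F\cup F^{-1}$ and invokes asymptotic multiplicativity of the $\sigma_n$ (choosing $n$ large so that $\sigma_g\sigma_{g^{-1}}=\operatorname{id}$ off a $\delta$-small set) in order to deduce that $\psi$ approximately commutes with $g^{-1}$ from the fact that $\phi$ approximately commutes with $g$; your three-piece decomposition of the bad set shows directly that $\psi$ approximately commutes with $f$ itself, using only that $\sigma^{\text{\tiny U}}(f),\sigma^{\text{\tiny V}}(f)$ are permutations and that $\bar\phi$ is a bijection — no inverses in $G$ and no soficity of the approximations are needed, which is a small but real simplification. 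In the distance step you push the disagreement set forward by $\phi_1$ alone, whereas the paper intersects the images of it under $\phi_1$ and $\phi_2$ (an intersection that is large only because the two maps agree off the disagreement set); both give the same $O(\delta)$ loss. The only point stated loosely is the ``measure-preserving matching'': to make it precise, observe that for $u\in W$ with $\phi_1(u)\neq\phi_2(u)$ and $v:=\phi_1(u)$ one has $\psi_1(v)=u$ but $\psi_2(v)\neq u$, so $\phi_1$ injects the disagreement set of the $\phi_i$, minus $O(\delta|U_n|)$ points, into that of the $\psi_i$ (and $\psi_1$ gives the reverse injection). With that spelled out, the bookkeeping you flag as the main obstacle closes without difficulty.
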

\begin{proof}
	Assume that $\max\{h_{\Sigma^{\text{\tiny U}}}(\Sigma^{\text{\tiny V}}),h_{\Sigma^{\text{\tiny V}}}(\Sigma^{\text{\tiny U}})\}\geq 0$, since otherwise $h_{\Sigma^{\text{\tiny U}}}(\Sigma^{\text{\tiny V}})=-\infty$ and $h_{\Sigma^{\text{\tiny V}}}(\Sigma^{\text{\tiny U}})=-\infty$. In particular we can suppose $h_{\Sigma^{\text{\tiny U}}}(\Sigma^{\text{\tiny V}})\geq0$. Fix $\delta>0$, a finite set $F\subset G$ and $n\in\nat$. Let $\varphi\colon U\to V$ be an $(F\cup F^{-1},\delta)$-isomorphism. Let $n\in\nat$ be big enough so that $\sigma^{\text{\tiny V}}$ and $\sigma^{\text{\tiny U}}$ are $\delta/2$-multiplicative and $\delta/2$-free with respect to $F\cup F^{-1}$, in particular we have
\begin{align*}
	\bar{V}^{1}:=\{v\in V~|~\sigma^{\text{\tiny V}}_{g}\sigma^{\text{\tiny V}}_{g^{-1}}v=v\text{, for every }g\in F\cup F^{-1}\}\text{ with }& |\bar{V}^{1}|\geq(1-\delta)|V|\text{ and }\\
	\bar{U}^{1}:=\{u\in U~|~\sigma^{\text{\tiny U}}_{g}\sigma^{\text{\tiny U}}_{g^{-1}}u=u\text{, for every }g\in F\cup F^{-1}\}\text{ with }& |\bar{U}^{1}|\geq(1-\delta)|U|.
\end{align*}
	 Since $\varphi$ is an $(F\cup F^{-1},\delta)$-isomorphism, we can find sets $\bar{V}^{2}\subset V$ and $\bar{U}^{2}\subset U$ with $\min\{|\bar{U}^{2}|/|U|,|\bar{V}^{2}|/|V|\}\geq 1-\delta$ such that $\varphi|_{\bar{U}^{2}}\colon \bar{U}^{2}\to\bar{V}^{2}$ is a bijection. Therefore $|\bar{U}^{1}\cap \bar{U}^{2}|\geq (1-2\delta)|U|$ and $|\bar{V}^{1}\cap \bar{V}^{2}|\geq (1-2\delta)|V|$. Let 
	  \[ 
	  \bar{U}:=\bigcap_{g\in F\cup F^{-1}}\varphi^{-1}({(\sigma^{\text{\tiny V}}_{g})}^{-1}(\varphi(\bar{U}^{1}\cap\bar{U}^{2}))\cap\bar{V}^1\cap  \bar{V}^2)\cap \bar{U}^{1}\cap \bar{U}^{2}.
	  \]Therefore 
	  $$|\bar{U}|\geq (1-(8|F|+2)\delta)|U|\geq(1-10|F|\delta)|U|.$$ Note that $(\sigma^{\text{\tiny V}}_{g}\varphi)|_{\bar{U}}\colon \bar{U}\to T_{g}:=(\sigma^{\text{\tiny V}}_{g}\varphi)(\bar{U})$ is a bijection. Extend $\bar{\varphi}^{-1}$ in any way to $\psi\colon V\to U$. Since $d_{2}(\sigma^{\text{\tiny V}}_{g}\circ\varphi,\varphi\circ\sigma^{\text{\tiny U}}_{g})<\delta$, there must exist set $\ddot{U}\subseteq U$ with cardinality at least $(1-2\delta|F|)|U|$ such that $(\sigma^{\text{\tiny V}}_{g}\circ\varphi)u=(\varphi\circ\sigma^{\text{\tiny U}}_{g})u$ for every $u\in \ddot{U}$ and $g\in F\cup F^{-1}$. Since $\sigma^{\text{\tiny V}}_{g}\circ\varphi$ is a bijection from $\bar{U}\cap\ddot{U}$ to $T_{g}:=\sigma^{\text{\tiny V}}_{g}\circ\varphi(\bar{U}\cap\ddot{U})$ for every $g\in F\cup F^{-1}$, let us define $u_{v}\in \bar{U}\cap\ddot{U}$ so that $(\sigma^{\text{\tiny V}}_{g}\circ\varphi) u_{v}=v$ for $v\in T_{g}$ and compute
	\begin{align*}
	\sum_{v\in T_{g}}d(\psi\circ \sigma^{\text{\tiny V}}_{g^{-1}}(v),\sigma^{\text{\tiny U}}_{g^{-1}}\circ\psi(v))^{2}=&\\
	\sum_{u_{v}\in \bar{U}\cap \ddot{U}}d(\psi\circ \sigma^{\text{\tiny V}}_{g^{-1}}((\sigma^{\text{\tiny V}}_{g}\circ\varphi) u_{v}),\sigma^{\text{\tiny U}}_{g^{-1}}\circ\psi((\sigma^{\text{\tiny V}}_{g}\circ\varphi) u_{v}))^{2}=&\\
	\sum_{u_{v}\in \bar{U}\cap \ddot{U}}d((\psi\circ \varphi) u_{v},\sigma^{\text{\tiny U}}_{g^{-1}}\circ\psi((\sigma^{\text{\tiny V}}_{g}\circ\varphi) u_{v}))^{2}=&\\
	\sum_{u_{v}\in \bar{U}\cap \ddot{U}}d((\psi\circ \varphi) u_{v},(\sigma^{\text{\tiny U}}_{g^{-1}}\circ\psi\circ\varphi\circ\sigma^{\text{\tiny U}}_{g}) u_{v})^{2}=&\\
	\sum_{u_{v}\in \bar{U}\cap \ddot{U}}d(u_{v},u_{v})^{2}=&0.
	\end{align*}In the second and fourth equality, we have used the fact that $\sigma^{\text{\tiny V}}_{g^{-1}}\sigma^{\text{\tiny V}}_{g}v=v$ and $\sigma^{\text{\tiny U}}_{g^{-1}}\sigma^{\text{\tiny U}}_{g}u=u$ for every $v\in \bar{V}^{2}$ and $u\in \bar{U}$. Note also that $\varphi(u_{v})\in\bar{V}^{2}$ for every $v\in T_{g}$. In the fourth equality, we have used that $(\psi\circ \varphi)u=u$ for every $u\in \bar{U}$.
	Note that cardinality of $T_{g}$ satisfies $|T_{g}|\geq(1-(10|F|+2)\delta)|V|\geq(1-12|F|\delta)|V|$, hence
	$$
	\sum_{v\in V}d(\psi\circ \sigma^{\text{\tiny V}}_{g^{-1}}(v),\sigma^{\text{\tiny U}}_{g^{-1}}\circ\psi(v))^{2}\leq |V|-|T|\leq 12\delta|F||V|
	$$ and so $\psi$ is an $(F\cup F^{-1},\sqrt{ 12\delta|F|})$-isomorphism. Therefore to every $(F\cup F^{-1},\delta)$-isomorphism we can assign an $(F\cup F^{-1},\sqrt{ 12\delta|F|})$-isomorphism. We will prove that such assignment will map $(\epsilon,d_{2})$-separated sets to $(\epsilon/2,d_{2})$-separated sets.
	
	Let $\mathcal{M}$ be an $(\epsilon,d_{2})$-separated set in $\iso(F\cup F^{-1},\delta,\sigma^{\text{\tiny U}},\sigma^{\text{\tiny V}})$. Then for every distinct $\varphi_{1},\varphi_{2}\in\mathcal{M}$ we have $d_{2}(\varphi_{1},\varphi_{2})\geq\epsilon$, so that $\varphi_{1}(u)\neq\varphi_{2}(u)$ on the subset $\tilde{U}\subset U$ of  cardinality at least $\epsilon^2|U|$. Let $\bar{\varphi}_{1}:=\varphi|_{\bar{U}^{1}}\colon \bar{U}^{1}\to\bar{V}^{1}$ and $\bar{\varphi}_{2}:=\varphi|_{\bar{U}^{2}}\colon \bar{U}^{2}\to\bar{V}^{2}$ be bijections with $\min\{|\bar{U}^{1}|,|\bar{U}^{2}|\}\geq(1-\delta)|U|$. Extend $\bar{\varphi}_{1}^{-1}$ to $\psi_{1}\colon V\to U$ and $\bar{\varphi}_{2}^{-1}$ to $\psi_{2}\colon V\to U$. Note that $\psi_{1}$ and $\psi_{2}$ are bijections on $V'=\varphi_{1}(\tilde{U}\cap \bar{U}^{1}\cap\bar{U}^{2})\cap \varphi_{2}(\tilde{U}\cap \bar{U}^{1}\cap\bar{U}^{2})$. We claim that $\psi_{1}(v)\neq\psi_{2}(v)$ for every $v\in V'$. Indeed, find $u_{1}$ and $u_{2}$ in $\tilde{U}\cap \bar{U}^{1}\cap\bar{U}^{2}$ such that $\varphi_{1}(u_{1})=v=\varphi_{2}(u_{2})$, then $\psi_{1}(v)=\psi_{2}(v)$ leads to contradiction, since it implies that $u_{1}=u_{2}$, but $\varphi_{1}$ and $\varphi_{2}$ must differ on $\tilde{U}$. Note that we can estimate $|V'|\geq(\epsilon^2-4\delta)|V|$ and as a consequence $d_{2}(\psi_{1},\psi_{2})\geq\sqrt{\epsilon^2-4\delta}\geq\epsilon/2$, if $\delta$ is small enough.
	It yields the inequality on $d_{2}$-separated sets
	$$
	N_{\epsilon}(\iso(F\cup F^{-1},\delta,\sigma_{n}^{\text{\tiny U}},\sigma_{n}^{\text{\tiny V}}),d_{2})\leq N_{\epsilon/2}(\iso(F\cup F^{-1},\sqrt{ 12\delta|F|},\sigma_{n}^{\text{\tiny V}},\sigma_{n}^{\text{\tiny U}}),d_{2}).
	$$Taking the appropriate limits, the inequality above yields $h_{\Sigma^{\text{\tiny U}}}(\Sigma^{\text{\tiny V}})\leq h_{\Sigma^{\text{\tiny V}}}(\Sigma^{\text{\tiny U}})$.
	Note that, since $h_{\Sigma^{\text{\tiny U}}}(\Sigma^{\text{\tiny V}})\geq0$ there must be also $h_{\Sigma^{\text{\tiny V}}}(\Sigma^{\text{\tiny U}})\geq0$. We can now repeat the whole reasoning with $\Sigma^{\text{\tiny U}}$ interchanged with $\Sigma^{\text{\tiny V}}$. 
\end{proof}
\begin{lem}\label{lem-entropy-of-sofic-approximation}
	If $h(\Sigma^{\text{\tiny U}},\Sigma^{\text{\tiny V}})\geq0$, then for every dynamical system $(X,G)$ we have
	$$
	h_{\Sigma^{\text{\tiny V}}}(X,G)=h_{\Sigma^{\text{\tiny U}}}(X,G).
	$$
\end{lem}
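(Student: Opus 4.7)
The plan is to use the hypothesis $h(\Sigma^{\text{\tiny U}},\Sigma^{\text{\tiny V}})\geq 0$ to produce, for every finite $F\subset G$ and every $\delta>0$ and for infinitely many $n$, an element $\phi_n\in\iso(F,\delta,\sigma^{\text{\tiny U}}_n,\sigma^{\text{\tiny V}}_n)$, and then to transport approximating maps by post-composition with $\phi_n$. Lemma \ref{lem-symmetry} already gives $h(\Sigma^{\text{\tiny U}},\Sigma^{\text{\tiny V}})=h(\Sigma^{\text{\tiny V}},\Sigma^{\text{\tiny U}})$, so it suffices to prove $h_{\Sigma^{\text{\tiny V}}}(X,G)\leq h_{\Sigma^{\text{\tiny U}}}(X,G)$ and obtain the opposite inequality by swapping the roles of $U$ and $V$. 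By Theorem \ref{d2-dinfty} both entropies may be computed using $(d_{2},\epsilon)$-separation in place of $d_\infty$, which is the variant the composition argument preserves naturally.

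Fix $\epsilon>0$ and target parameters $F'\subset G$ finite and $\delta'>0$ on the $\Sigma^{\text{\tiny U}}$-side. I would set $F=F'$ and choose $\delta>0$ small enough so that $\sqrt{3\delta}\leq\delta'$ and $\delta<\epsilon^{2}/2$. For each $n$ for which $\iso(F,\delta,\sigma^{\text{\tiny U}}_n,\sigma^{\text{\tiny V}}_n)$ is nonempty (a set of infinitely many indices, by the hypothesis), pick any $\phi_n$ in it with bijection domain $\bar U_n\subseteq U_n$ and image $\bar V_n\subseteq V_n$, and define $\Phi_n(\varphi)=\varphi\circ\phi_n$ for $\varphi\colon V_n\to X$. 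The two properties I would verify are the inclusion
\[
\Phi_n\bigl(\map(d,F,\delta,\sigma^{\text{\tiny V}}_n)\bigr)\subseteq\map(d,F',\delta',\sigma^{\text{\tiny U}}_n)
\]
and the fact that $\Phi_n$ sends $(d_{2},\epsilon)$-separated subsets to $(d_{2},\epsilon/\sqrt{2})$-separated subsets. Combining them yields the pointwise inequality $N_\epsilon(\map(d,F,\delta,\sigma^{\text{\tiny V}}_n),d_{2})\leq N_{\epsilon/\sqrt{2}}(\map(d,F',\delta',\sigma^{\text{\tiny U}}_n),d_{2})$ along the good subsequence of $n$, and taking $\limsup_n$, then $\inf_{F',\delta'}$, then $\sup_\epsilon$ produces the claimed inequality of entropies.

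The technical heart is $d_{2}$ bookkeeping. For $s\in F'$, I would bound $d_{2}(\varphi\phi_n\sigma^{\text{\tiny U}}_s,\alpha_s\varphi\phi_n)$ by the triangle inequality through the intermediate term $\varphi\sigma^{\text{\tiny V}}_s\phi_n$: since $V_n$ carries the discrete metric, the $(F,\delta)$-isomorphism property forces $\phi_n\sigma^{\text{\tiny U}}_s=\sigma^{\text{\tiny V}}_s\phi_n$ on a subset of $U_n$ of density at least $1-\delta^{2}$, controlling the first summand, while restricting the second summand to $\bar U_n$ converts the sum over $U_n$ into a sum over $\bar V_n$ that is already bounded by $d_{2}(\varphi\sigma^{\text{\tiny V}}_s,\alpha_s\varphi)^{2}\leq\delta^{2}$. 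The same restriction to $\bar U_n$ also yields $d_{2}(\varphi_{1}\phi_n,\varphi_{2}\phi_n)^{2}\geq\epsilon^{2}-\delta$ and hence the separation estimate. The main obstacle I anticipate is calibrating $\delta$ so that the two sources of loss, namely the density $1-\delta^{2}$ coming from approximate intertwining and the density $1-\delta$ coming from the partial bijectivity of $\phi_n$, together still fit under the single target parameter $\delta'$; this forces $\delta$ to be chosen substantially smaller than both $\delta'$ and $\epsilon^{2}$, but is otherwise routine.
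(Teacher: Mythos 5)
Your proposal is correct and follows essentially the same route as the paper's proof: both transport pseudoorbits by post-composition with an $(F,\delta)$-isomorphism, check that the composite is a pseudoorbit with slightly degraded parameters, verify that separation survives with a controlled loss, and then use symmetry of $h(\Sigma^{\text{\tiny U}},\Sigma^{\text{\tiny V}})$ to upgrade the one-sided inequality to an equality. The only cosmetic difference is that you keep $(\epsilon,d_{2})$-separation throughout (licensed by Theorem \ref{d2-dinfty}), whereas the paper passes to $(\sqrt{\epsilon},d_{\infty})$-separation; both variants are legitimate.
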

\begin{proof}
	We may assume that at least one of $h_{\Sigma^{\text{\tiny V}}}(X,G)$, $h_{\Sigma^{\text{\tiny U}}}(X,G)$ is non negative, since otherwise equality is obvious. Let $h_{\Sigma^{\text{\tiny V}}}(X,G)\geq 0$. 
	We will prove that 
	\begin{equation}\label{claim}
	h_{\Sigma^{\text{\tiny V}}}(X,G)\leq h_{\Sigma^{\text{\tiny U}}}(X,G).
	\end{equation}Consequently, we will have $h_{\Sigma^{\text{\tiny U}}}(X,G)\geq 0$. In other words, if only one of the entropies $h_{\Sigma^{\text{\tiny V}}}(X,G)$, $h_{\Sigma^{\text{\tiny U}}}(X,G)$ is non negative, both of them must be. Then by interchanging sofic approximations $\Sigma^{\text{\tiny V}}$ and $\Sigma^{\text{\tiny V}}$ we will obtain equality in \eqref{claim}.
	
	Fix some $ \epsilon>0,\delta>0 $, a finite set $F\subset G$ and $n\in\nat$ big enough so that $ \iso(F,\delta,\sigma_{n}^{\text{\tiny U}},\sigma_{n}^{\text{\tiny V}})\neq\emptyset $. Put $\sigma^{\text{\tiny V}}=\sigma^{\text{\tiny V}}_{n}$, $\sigma^{\text{\tiny U}}=\sigma^{\text{\tiny U}}_{n}$. Take an $(F,\delta)$-isomorphism $\psi\colon U_{n}\to V_{n}$ and an $(F,\delta)$-pseudoorbit $\varphi\colon V_{n}\to X$. We will prove, that $\varphi\circ\psi\colon U_{n}\to X$ is  an $(F,\sqrt{8\delta})$-pseudoorbit. We have to check that $d_{2}(f\varphi\circ\psi, \varphi\circ\psi\circ\sigma^{\text{\tiny U}}_{f})^{2}<8\delta$ holds for every $f\in F$. Let $\bar{\psi}\colon\bar{U}\to\bar{V}$ be a bijection such that $\psi|_{\bar{U}}=\bar{\psi}$. Then
	\begin{multline}\label{sum}
	d_{2}(f\varphi\circ\psi, \varphi\circ\psi\circ\sigma^{\text{\tiny U}}_{f})^{2}=\\\frac{1}{|U_{n}|}\sum_{u\in \bar{U}}d(f\varphi\psi(u),\varphi\psi\sigma^{\text{\tiny U}}_{f}(u))^2+
	\frac{1}{|U_{n}|}\sum_{u\in U_{n}\setminus\bar{U}}d(f\varphi\psi(u),\varphi\psi\sigma^{\text{\tiny U}}_{f}(u))^2=\\
	\frac{1}{|U_{n}|}\sum_{v\in \bar{V}}d(f\varphi(v),\varphi\psi\sigma^{\text{\tiny U}}_{f}(\bar{\psi}^{-1}(v)))^2+
	\frac{1}{|U_{n}|}\sum_{u\in U_{n}\setminus\bar{U}}d(f\varphi\psi(u),\varphi\psi\sigma^{\text{\tiny U}}_{f}(u))^2,
	\end{multline}where we substituted $v=\psi(u)$. Now, confine our attention to the first sum. As $d_{2}(\sigma_{f}^{\text{\tiny V}}\psi,\psi\sigma_{f}^{\text{\tiny U}})<\delta$ there exists a set $\ddot{U}\subset U_{n}$ with cardinality at least $(1-\delta)|U_{n}|$ such that $\sigma_{f}^{\text{\tiny V}}(\psi(u))=\psi\sigma_{f}^{\text{\tiny U}}(u)$ for every $u\in \ddot{U}$. Therefore $\varphi\sigma_{f}^{\text{\tiny V}}(v)=\varphi\psi\sigma_{f}^{\text{\tiny U}}(\bar{\psi}^{-1}(v))$ for every $v\in\bar{\psi}(\ddot{U}\cap \bar{U})$. Note that cardinality of $\bar{\psi}(\ddot{U}\cap \bar{U})$ must not be smaller then $(1-2\delta)|V_{n}|=(1-2\delta)|U_{n}|$. Consequently, $\sum_{v\in \bar{V}}d(\varphi\sigma_{f}^{\text{\tiny V}}(v),\varphi\psi\sigma_{f}^{\text{\tiny U}}(\bar{\psi}^{-1}(v)))^2<2\delta|U_{n}|$. Let us compute
	\begin{multline*}
	\frac{1}{|U_{n}|}\sum_{v\in \bar{V}}d(f\varphi(v),\varphi\psi\sigma^{\text{\tiny U}}_{f}(\bar{\psi}^{-1}(v)))^2\leq\\
	\frac{1}{|U_{n}|}\sum_{v\in \bar{V}}\big(d(f\varphi(v),\varphi\sigma_{f}^{\text{\tiny V}}(v))+d(\varphi\sigma_{f}^{\text{\tiny V}}(v),\varphi\psi\sigma_{f}^{\text{\tiny U}}(\bar{\psi}^{-1}(v)))\big)^2\leq\\
	\frac{1}{|U_{n}|}\sum_{v\in \bar{V}}d(f\varphi(v),\varphi\sigma_{f}^{\text{\tiny V}}(v))^2+\frac{1}{|U_{n}|}\sum_{v\in \bar{V}}d(\varphi\sigma_{f}^{\text{\tiny V}}(v),\varphi\psi\sigma_{f}^{\text{\tiny U}}(\bar{\psi}^{-1}(v)))^2+\\
	\frac{2}{|U_{n}|}\sum_{v\in \bar{V}}d(\varphi\sigma_{f}^{\text{\tiny V}}(v),\varphi\psi\sigma_{f}^{\text{\tiny U}}(\bar{\psi}^{-1}(v)))d(f\varphi(v),\varphi\sigma_{f}^{\text{\tiny V}}(v))\leq
	\delta^2+ 2\delta+4\delta<7\delta,
	\end{multline*}since $d_{2}(f\varphi,\varphi\sigma_{f}^{\text{\tiny V}})^{2}\leq \delta^{2}$.
	On the other side, we find that the second sum in \eqref{sum} can be bounded
	$$
	\frac{1}{|U_{n}|}\sum_{u\in U_{n}\setminus\bar{U}}d(f\varphi\psi(u),\varphi\psi\sigma^{\text{\tiny U}}_{f}(u))^2\leq\delta,
	$$since  $\frac{|U_{n}\setminus\bar{U}|}{|U_{n}|}\leq\delta$. Therefore 
	$$
	d_{2}(f\varphi\circ\psi, \varphi\circ\psi\circ\sigma^{\text{\tiny U}}_{f})^{2}\leq 8\delta
	$$ and $\varphi\circ\psi$ is an $(F,\sqrt{8\delta})$-pseudoorbit.
	
	We will now prove that the function 
	$$\map(d,F,\delta,\sigma^{\text{\tiny V}})\ni\varphi\mapsto\Psi(\varphi):=\varphi\circ\psi\in \map(d,F,\sqrt{8\delta},\sigma^{\text{\tiny U}})
	$$ maps $(\epsilon,d_{2})$-separated sets to $(\sqrt{\epsilon},d_{\infty})$-separated sets, in particular we will show that $\Psi$ is injective on $(\epsilon,d_{2})$-separated sets for any $\epsilon>0$.
	Take $(F,\delta)$-pseudoorbits $\varphi_{1}$ and $\varphi_{2}$ with $d_{2}(\varphi_{1},\varphi_{2})\geq\epsilon$. It follows that $d(\varphi_{1}(v),\varphi_{2}(v))\geq\sqrt{\epsilon}$ on the subset $V'$ of $V_{n}$ with cardinality at least $\epsilon|V_{n}|$. Additionally, we can find $U'\subset U_{n}$ with cardinality at least $(1-\delta)|U_{n}|=(1-\delta)|V_{n}|$ such that $\psi|_{U'}\colon U'\to\psi(U')$ is a bijection. Hence if $\delta$ is small enough then $U''=U'\cap \psi^{-1}(V'\cap \psi(U'))$ is nonempty, since $|U''|\geq (\epsilon-2\delta)|U_{n}|$ and there exists $u\in U''$ such that $d(\varphi_{1}(\psi(u)),\varphi_{2}(\psi(u)))\geq\sqrt{\epsilon}$ and so $d_{\infty}(\varphi_{1}\circ\psi,\varphi_{2}\circ\psi)\geq\sqrt{\epsilon}$. It proves our claim.
	
	We have showed that
	$$
	N_{\epsilon}(\map(d,F,\delta,\sigma^{\text{\tiny V}}),d_{2})\leq N_{\sqrt{\epsilon}}(\map(d,F,\sqrt{8\delta},\sigma^{\text{\tiny U}}),d_{\infty}).
	$$Taking the logarithms and dividing by $|U_{n}|$ we get
	
	\[ \frac{\log N_{\epsilon}(\map(d,F,\delta,\sigma^{\text{\tiny V}}),d_{2})}{|V_{n}|}\leq\frac{\log N_{\sqrt{\epsilon}}(\map(d,F,\sqrt{8\delta},\sigma^{\text{\tiny U}}),d_{\infty})}{|U_{n}|}, \]
	 and finally applying appropriate limits and using Remark \ref{d2-dinfty} we obtain claim \eqref{claim}.
\end{proof}
\begin{rem}
	It is worth noting that entropy of a dynamical system $(X,G)$ depends on a sofic sequence. First example of such phenomenon was presented by Lewis Bowen in \cite[Thm. 4.1.]{Bowen2019}. It is a system with infinite negative entropy and zero entropy, with respect to different sofic sequences. It was by no means satisfactory, since it left open major problem, namely, whether there exists a system with two different positive entropies. It was finally answered positively in late 2019 by Dylan Airey, Lewis Bowen and Frank Lin in \cite{DBF2019}.	
\end{rem}
We now prove two key lemmas that will help us to formulate results about entropy of Toeplitz systems independently or partially independently of a sofic approximation.
\begin{thm}\label{characterisation}
	Let $\Sigma=\{\sigma_{n}\colon G\to\Sym(V_{n})\}$  be a sofic sequence by homomorphisms with $K_{n}=\ker \sigma_{n}$. Then there exists a sequence $(f_{n})_{n\in\nat}\subset\nat$ such that $\ddot{\Sigma}=\{ \ddot{\sigma}_{n}\colon G\to\Sym(\amalg _{f_{n}}G/K_{n}) \}$, where $\ddot{\sigma}_{n}$ is obtained from $\sigma_{n}$ as in Lemma \ref{lem-amalg}, is a sofic sequence, and for every system $(X,G)$ we have $h_{\Sigma}(X,G)\leq h_{\ddot{\Sigma}}(X,G)$.
\end{thm}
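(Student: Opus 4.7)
My plan is to take $f_n := |V_n|$, so that $\ddot{V}_n = \amalg_{|V_n|} G/K_n$ has cardinality $|V_n| \cdot |G/K_n|$, and to apply Lemma \ref{lem-amalg} to $\Sigma$ with $a_n := |G/K_n|$ copies, obtaining an auxiliary sofic approximation $\Sigma'$ on sets $V'_n := \amalg_{|G/K_n|} V_n$ of the same cardinality. The desired inequality $h_\Sigma(X,G) \leq h_{\ddot{\Sigma}}(X,G)$ will follow from the chain $h_\Sigma \leq h_{\Sigma'} \leq h_{\ddot{\Sigma}}$. To verify that $\ddot{\Sigma}$ is sofic, I note that each $K_n$ is normal as a kernel and that asymptotic freeness of $\Sigma$ forces $\bigcap K_n = \{e\}$; the natural left action of $G$ on $G/K_n$ is therefore asymptotically free (and multiplicative, being a homomorphism), and Lemma \ref{lem-amalg} transports these properties to $\ddot{\Sigma}$ for any choice of $(f_n)$.

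The intermediate inequality $h_{\Sigma'}(X,G) \geq h_\Sigma(X,G)$ is direct. Given an $(\epsilon, d_\infty)$-separated subset $E \subseteq \map(d, F, \delta, \sigma_n)$ of size $N$, the product $E^{a_n}$ interpreted as maps $V'_n \to X$ via $(i, v) \mapsto \varphi_i(v)$ produces an $(\epsilon, d_\infty)$-separated subset of $\map(d, F, \delta, \sigma'_n)$ of size $N^{a_n}$: the $d_2$-pseudoorbit condition on tuples is the coordinate-wise average, and $d_\infty$-separation of tuples equals the coordinate maximum. Dividing $a_n \log N$ by $|V'_n| = a_n|V_n|$ recovers the density $\log N/|V_n|$, and passing to $\limsup$ (appealing to Theorem \ref{d2-dinfty} to interchange $d_2$ and $d_\infty$ in the entropy) gives the claim.

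The main obstacle is establishing $h_{\Sigma'}(X,G) \leq h_{\ddot{\Sigma}}(X,G)$, which by Lemma \ref{lem-entropy-of-sofic-approximation} reduces to constructing, for every finite $F \subset G$, $\delta > 0$, and $n$ large enough, an $(F, \delta)$-isomorphism $\psi_n \colon \ddot{V}_n \to V'_n$. Although both sets have common cardinality $|V_n|\cdot|G/K_n|$, their $G$-orbit structures differ fundamentally: $\ddot{V}_n$ consists of $|V_n|$ regular $G/K_n$-orbits of uniform size $|G/K_n|$, whereas $V'_n$ decomposes into $|G/K_n|\cdot N_n$ orbits of generally smaller sizes $|O_j|$ (where the $O_j$ are the $\sigma_n(G)$-orbits of $V_n$), so no exact $G$-equivariant bijection exists. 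My strategy is to exploit asymptotic freeness: the $F$-free subset $V_n^F := \{v \in V_n \colon \sigma_n(f)v \neq v \text{ for all } f \in F \setminus \{e\}\}$ satisfies $|V_n^F|/|V_n| \to 1$. For each orbit $O$ of $V_n$ with representative $v_O$ and stabilizer $H_O$, I fix a set-theoretic section $s_O \colon O \to G/K_n$ of the surjection $gK_n \mapsto \sigma_n(g)v_O$ together with coset representatives of $K_n$ in $H_O$; these data partition $G/K_n$ into $[H_O:K_n]$ slices each in bijection with $O$, allowing one to match the regular orbit $\{v\}\times G/K_n$ in $\ddot{V}_n$ with $[H_O:K_n]$ distinct copies of $O$ in $V'_n$ via a globally consistent assignment of copy indices. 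The resulting bijection $\psi_n$ is $F$-equivariant precisely at points where $s_O(\sigma_n(f)w) \equiv f\, s_O(w) \pmod{K_n}$; for orbits $O$ contained in $V_n^{F\cdot F}$, a Rokhlin-type tiling of $O$ by $F$-fundamental domains produces a section $s_O$ whose $F$-violation set is confined to a boundary of relative size $O(|F|/|O|)$, and on the orbits not so contained—of total relative size at most $\delta$ for $n$ large by asymptotic freeness—$\psi_n$ is extended by an arbitrary bijection. Summing the $d_2$-errors yields the $(F,\delta)$-isomorphism condition required by Definition \ref{entropy-of-sofic-approximation}, completing the proof.
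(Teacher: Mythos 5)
Your first step ($f_n:=|V_n|$, the product construction giving $h_{\Sigma}\leq h_{\Sigma'}$ for the amalgamated copies) is correct, but the third step, which carries all the weight, fails, and not for a reparable reason. Asymptotic freeness only controls the measure of points whose stabilizer meets a \emph{fixed} finite set $F$; it does not force most orbits of $V_n$ to be regular $G/K_n$-orbits. Concretely, let $G=\langle a,b\rangle$, let $K_n\trianglelefteq G$ with $G/K_n\cong \mathrm{SL}_2(\mathbb{F}_{p_n})$ chosen so that the Schreier graphs of the right-multiplication action form an expander family, let $H_n$ be the preimage of the centre $\{\pm I\}$ (so $[H_n:K_n]=2$ and $\bigcap_n H_n=\{e\}$ by torsion-freeness), and let $V_n=G/K_n\amalg\coprod_{j=1}^{M_n}G/H_n$ with $M_n\to\infty$. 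This is a sofic approximation by homomorphisms with $\ker\sigma_n=K_n$, yet asymptotically all of $V_n$ lies in orbits with stabilizer $H_n\supsetneq K_n$. For $F\supseteq\{a^{\pm1},b^{\pm1}\}$ and $\delta$ small, no $(F,\delta)$-isomorphism $\phi$ from a disjoint union of copies of $G/K_n$ to a set that is mostly a disjoint union of copies of $G/H_n$ can exist: the ``copy index'' $\iota\circ\phi$ is then an almost-$F$-invariant colouring of each expander component $G/K_n$, so by the Cheeger inequality each component is sent, up to a small error, into a single copy of $G/H_n$, which has only half as many points --- contradicting near-injectivity of $\phi$. Hence $\iso(F,\delta,\ddot\sigma_n,\sigma_n')=\emptyset$ and your appeal to Lemma \ref{lem-entropy-of-sofic-approximation} cannot be made. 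The same computation shows that your ``Rokhlin-type tiling of $O$ with boundary of relative size $O(|F|/|O|)$'' is a F\o{}lner-type assertion that is false for non-amenable $G$. Note also that had your construction succeeded, Lemma \ref{lem-entropy-of-sofic-approximation} would yield the \emph{equality} $h_{\Sigma}=h_{\ddot\Sigma}$, strictly stronger than the one-sided inequality the theorem asserts; the asymmetry of the statement is a hint that an equivalence of sofic approximations is not the right mechanism.

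The paper diverges from your plan exactly here. It takes $f_n$ to be the \emph{number of orbits} of $\sigma_n$ on $V_n$, observes that $V_n$ is honestly isomorphic as a $G$-set to $\amalg_{i=1}^{f_n}G/\stab(v_i)$ (so that step costs nothing: an exact isomorphism is an $(F,\delta)$-isomorphism for all $F,\delta$), and then passes from $\amalg_i G/\stab(v_i)$ to $\amalg_i G/K_n$ not via an $(F,\delta)$-isomorphism but by pulling back pseudo-orbits along the equivariant quotient maps $G/K_n\to G/\stab(v_i)$. This produces only an injection of $(\epsilon,d_\infty)$-separated sets of pseudo-orbits in one direction, which is precisely why only an inequality is obtained. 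To repair your write-up you would need to abandon the $(F,\delta)$-isomorphism between $\ddot V_n$ and $V_n'$ and replace it by such a one-directional comparison of microstate counts.
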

\begin{proof}
	Let us abbreviate notation of the action $G$ on $V_{n}$ by $\sigma_{n}(g)v=gv$ for every $n\in\nat$, $v\in V_{n}$ and $g\in G$. Define $K_{n}:=\Ker \sigma_{n}$, note that if $h\in \bigcap_{n\in\mathbb{N}}K_{n}$ then $hv=v$ for every $v\in V_{n}$ which contradicts asymptotic freeness, unless $h=e$. Hence $\bigcap_{n\in\mathbb{N}}K_{n}=\{e\}$ and each $K_{n}$ is finitely indexed and normal, in particular $G$ is residually finite.
	
	Let $n\in \nat$. Notice that for every $v\in V_{n}$ the mapping	$$\Psi_{v}\colon G\ni g\mapsto gv\in V_{n}$$
	is a homomorphism of $G$-sets and its set of fixed points is the stabilizer of $v$ for the $\sigma_{n}$ action on $V_{n}$. We will denote this stabilizer by $\stab(v)=\{g\in G\colon gv=v\}$. By elementary algebra it induces an isomorphism
	$$
	\tilde{\Psi}_{v}\colon G/\stab(v) \ni g\stab(v)\mapsto gv\in \im\Psi_{v},
	$$
	where $\im\Psi_{v}$ is an orbit of $v$ by the action of $G$ on $V_{n}$. Given distinct $v,w\in V_{n}$ we have either $\{gv\}_{g\in G}\cap\{gw\}_{g\in G}=\emptyset$ or  $h_{1}v=h_{2}w$ for some $h_{1},h_{2}\in G$. In the latter case  $h_{2}^{-1}h_{1}v=w$ and 
	$$\{gv\}_{g\in G}=\{gh_{2}^{-1}h_{1}v\}_{g\in G}=\{gw\}_{g\in G},$$ where in the first equality we have used that transformation $g\mapsto gh_{2}^{-1}h_{1}$ is a bijection on $G$. It follows that the images of $\Psi_{v}$ and $\Psi_{w}$ are either disjoint or equal. Hence for some $f_{n}\in\nat$ and $v_{1},...,v_{f_{n}}\in V_{n}$ the mapping
	$$
	(\Psi_{v_{i}})_{i=1}^{f_{n}}\colon \amalg_{f_{n}}G\ni (j, g)\mapsto (j,gv_{j})\in \amalg_{i=1}^{f_{n}}\im\Psi_{v_{i}}=V_{n}
	$$
	is a factor map of $G$-sets and as before it induces an isomorphism
	$$
	(\tilde{\Psi}_{v_{i}})_{i=1}^{f_{n}}\colon \amalg_{i=1}^{f_{n}}G/\stab(v_{i})\ni (j, g\stab(v_{j}))\mapsto (j,gv_{j})\in \amalg_{i=1}^{f_{n}}\im\Psi_{v_{i}}=V_{n}.
	$$	
	Define the sofic sequence $\tilde{\Sigma}=\{ \tilde{\sigma}_{n}\colon G\to\Sym(\amalg_{i=1}^{f_{n}}G/H_{i}^{n}) \}$, where $H_{i}^{n}=\stab(v_{i})$,  by the formula $\ddot{\sigma}_{n}(g)(i,cG/H_{i}^{n})=(i,cg^{-1}H_{i}^{n})$ for every $i\leq f_{n}$ and $c,g\in G$. Since $(\tilde{\Psi}_{v_{i}})_{i=1}^{f_{n}}$ is an isomorphism of $G$-sets $(\amalg_{i=1}^{f_{n}}G/H_{i}^{n},\tilde{\sigma}_{n})$ and $(V_{n},\sigma_{n})$, it is an $(F,\delta)$-isomorphism for every finite $F\subset G$ and $\delta>0$. Therefore $h(\Sigma,\tilde{\Sigma})\geq0$ and Theorem \ref{lem-entropy-of-sofic-approximation} implies that $h_{\Sigma}(X,G)=h_{\tilde{\Sigma}}(X,G)$.
	
	Finally put $\ddot{\Sigma}=\{ \ddot{\sigma}_{n}\colon G\to\Sym(\amalg_{i=1}^{f_{n}}G/K_{n}) \}$, where $\ddot{\sigma}_{n}(g)(i,cK_{n})=(i,cg^{-1}K_{n})$ for every $i\leq f_{n}$ and $c,g\in G$. By Lemma \ref{lem-amalg} we know that $\ddot{\Sigma}$ is a sofic approximation.
	
	Let $n\in\nat$. Note that if $\phi\colon \amalg_{i=1}^{f_{n}}G/H_{i}^{n} \to X$ is an $(F,\delta)$-pseudoorbit, for some finite $F\subset G$ and $\delta>0$, then $\Psi(\phi)=\psi\colon\amalg_{i=1}^{f_{n}}G/K_{n}\to X$ with $\psi(i,gK_{n})=\phi(i,gH_{i}^{n})$, for every $g\in G$ and $i\leq f_{n}$, is well defined, and is an $(F,\delta)$-pseudoorbit. Indeed, it is enough to note that $K_{n}\subset H_{i}^{n}$ and for every $g\in G$, $h\in F$ and $i\leq f_{n}$ we have
\begin{multline*}
	d(h\psi(i,gK_{n}),\psi\ddot{\sigma}_{h}(i,gK_{n}))=d(h\psi(i,gK_{n}),\psi(i,gh^{-1}K_{n}))=\\
	d(h\phi(i,gH_{i}^{n}),\phi(i,gh^{-1}H_{i}^{n}))=d(h\phi(i,gH_{i}^{n}),\phi\tilde{\sigma}_{h}(i,gH_{i}^{n})),
\end{multline*}
	so $d_{2}(h\psi,\psi\ddot{\sigma}_{h})=d_{2}(h\phi,\phi\tilde{\sigma}_{h})$, since $g\in G$ was arbitrary.
	We can now define the function 
	$$\map(d,F,\delta,\tilde{\sigma}_{n})\ni\phi \mapsto \Psi(\phi)\in\map(d,F,\delta,\ddot{\sigma}_{n}).$$ Let us check that if $E\subset \map(d,F,\delta,\tilde{\sigma}_{n})$ is an $(\epsilon, d_{\infty})$-separated set then $\Psi(E)$ is also an $(\epsilon, d_{\infty})$-separated set. This is because if $\phi$ and $\phi'$ are in $E$ and satisfy $d_{\infty}(\phi,\phi')\geq\epsilon$ then  $d(\phi(i_{0},g_{0}H_{i_{0}}^{n}),\phi'(i_{0},g_{0}H_{i_{0}}^{n}))\geq\epsilon$ for some $g_{0}\in G$ and $i_{0}\leq f_{n}$. Note that $$d(\phi(i_{0},g_{0}H_{i_{0}}^{n}),\phi'(i_{0},g_{0}H_{i_{0}}^{n}))=d(\Psi(\phi)(i_{0},g_{0}K_{n}),\Psi(\phi')(i_{0},g_{0}K_{n})).$$ Therefore $\Psi(E)$ is also an $(\epsilon,d_{\infty})$-separated set, so $h_{\tilde{\Sigma}}(X,G)\leq h_{\ddot{\Sigma}}(X,G)$ and the main claim follows.
\end{proof}
\begin{dfn}\label{1}
	Let $\Sigma'=\{\sigma'_{n}\colon G\to\Sym(V_{n})\}_{n\in\nat}$ be a sofic approximation to a free group $G=\langle S  \rangle$ generated by a finite set $S$. We define a sequence $\Sigma=\{\sigma_{n}\colon G\to\Sym(V_{n})\}_{n\in\nat}$ by the formula \[ \sigma_{n}(f)v:=(\sigma'_{n}(s_{1})^{\alpha_{1}}\circ\ldots\circ\sigma'_{n}(s_{k})^{\alpha_{k}})v,
	\] for any reduced element $ f=\prod_{i=1}^{k}s_{i}^{\alpha_{i}}\in G $, where $ s_{i}\in S $  and $ \alpha_{i}\in\nat $ for $ i=1,\ldots k $, and some $k\in \nat$.
\end{dfn}
\begin{lem}\label{free-group-sofic-app}
		If $\Sigma'=\{\sigma'_{n}\colon G\to\Sym(V_{n})\}_{n\in\nat}$ is a sofic approximation to a free group $G=\langle S  \rangle$ generated by a finite set $S$, then $\Sigma=\{\sigma_{n}\}_{n\in\nat}$ defined according to Definition \ref{1} is a sofic approximation by homomorphisms equivalent to $\Sigma'$.
\end{lem}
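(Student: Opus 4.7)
The plan is to verify in turn that each $\sigma_n$ is an honest group homomorphism, that $\Sigma$ satisfies the sofic conditions \ref{S1} and \ref{S2}, and finally that $h(\Sigma,\Sigma')\geq 0$, so that Lemma \ref{lem-entropy-of-sofic-approximation} gives $h_\Sigma(X,G)=h_{\Sigma'}(X,G)$ for every dynamical system $(X,G)$.

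Because $G=\langle S\rangle$ is free on $S$, the universal property provides a unique homomorphism $G\to\Sym(V_n)$ whose restriction to $S$ agrees with $\sigma'_n|_S$, and inspection of the formula in Definition \ref{1} identifies it with $\sigma_n$. Thus $\sigma_n(st)=\sigma_n(s)\sigma_n(t)$ holds identically on $V_n$, so \ref{S1} is immediate.

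The technical heart of the argument is the following approximation claim: for every $w\in G$ and every $\delta>0$,
\[
\lim_{n\to\infty}\tfrac{1}{|V_n|}\,|\{v\in V_n\colon \sigma_n(w)v=\sigma'_n(w)v\}|=1.
\]
I would prove this by induction on the reduced word length $\ell(w)$. The base case $w=e$ follows from asymptotic multiplicativeness of $\Sigma'$ applied to $s=t=e$: since $\sigma'_n(e)$ is a bijection of $V_n$, the relation $\sigma'_n(e)^2v=\sigma'_n(e)v$ forces $\sigma'_n(e)v=v=\sigma_n(e)v$ on a subset of density tending to $1$. For the inductive step, write $w=s^{\pm1}w'$ with $\ell(w')=\ell(w)-1$; the inductive hypothesis provides a density-close-to-one set on which $\sigma_n(w')v=\sigma'_n(w')v$, and asymptotic multiplicativeness applied to the pair $(s^{\pm1},w')$ yields a further such set on which $\sigma'_n(s^{\pm1})\sigma'_n(w')v=\sigma'_n(w)v$. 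Intersecting the two and using $\sigma_n(w)=\sigma_n(s^{\pm1})\sigma_n(w')=\sigma'_n(s^{\pm1})\sigma_n(w')$ finishes the step.

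The remaining assertions fall out cleanly. For \ref{S2}, given distinct $s,t\in G$ set $w:=st^{-1}\neq e$; since $\sigma_n(t)$ is a bijection of $V_n$, it suffices to show $\sigma_n(w)u\neq u$ on a set of density approaching $1$. Combining the approximation claim, asymptotic freeness of $\Sigma'$ applied to the pair $(w,e)$, and the base case $\sigma'_n(e)u=u$ on most $u$, we obtain $\sigma_n(w)u=\sigma'_n(w)u\neq\sigma'_n(e)u=u$ on a large-density set. For equivalence, the identity map $\id\colon V_n\to V_n$ is automatically a bijection onto itself, and the approximation claim applied to each $f\in F$ yields $d_2(\sigma'_n(f),\sigma_n(f))<\delta$ for all sufficiently large $n$, so $\id\in\iso(F,\delta,\sigma'_n,\sigma_n)$ eventually. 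Hence $h(\Sigma,\Sigma')\geq 0$, and Lemma \ref{lem-entropy-of-sofic-approximation} completes the proof. The only subtle point is the quantitative bookkeeping in the inductive step: each level intersects two density-close-to-one subsets, but the number of levels equals $\ell(w)$, which is fixed independently of $n$, so the density loss stays under control.
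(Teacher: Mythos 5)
Your argument is correct and follows essentially the same route as the paper: both reduce the lemma to showing that $\sigma_n(f)$ and $\sigma'_n(f)$ agree on a subset of $V_n$ of density tending to $1$ for each fixed $f$, deduce that $\id\colon V_n\to V_n$ is eventually an $(F,\delta)$-isomorphism, and invoke Lemma \ref{lem-entropy-of-sofic-approximation}; you additionally spell out the induction on word length and the verification of \ref{S2}, both of which the paper leaves implicit. One small repair is needed in your inductive step: for $s\in S$ one has $\sigma_n(s^{-1})=\sigma'_n(s)^{-1}$, which coincides with $\sigma'_n(s^{-1})$ only on a density-close-to-one set (obtained from asymptotic multiplicativeness applied to the pair $(s,s^{-1})$ together with your base case), so the asserted identity $\sigma_n(s^{\pm1})=\sigma'_n(s^{\pm1})$ should be replaced by one more density estimate of exactly the kind you already use.
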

\begin{proof}
	Since $S$ is a set of generators, every $\sigma_{n}$ is indeed a well defined homomorphism. We will prove that for every finite subset $F\subset G$ and $\delta>0$ there is $N\in \nat$ such that for every $n>N$ the identity map $\operatorname{id}\colon V_{n}\to V_{n}$ is an $(F,\delta)$-isomorphism. Note that soficity of $\Sigma'$ implies that there exists $N\in\nat$ such that for every $n>N$ we have $|U|<\delta^{2}|V_{n}|$, where
	\begin{multline*}
	U=\{v\in V_{n}~|~(\sigma'_{n}(s_{1})^{\alpha_{1}}\circ\ldots\circ\sigma'_{n}(s_{k})^{\alpha_{k}})v\neq\sigma'_{n}(f)v,\\\text{ where } f=\prod_{i=1}^{k}s_{i}^{\alpha_{i}}\in F\text{ is a reduced word in }\langle S\rangle,\text{for some }k\in\nat\}.
	\end{multline*}
	Since $\sigma_{n}$ is defined by the action on generators, the cardinality of the set of all elements $v$ in $V_{n}$ on which $d(\sigma_{n}(f)v,\sigma'_{n}(f)v)=1$ for some $f\in F$ is equal to the cardinality of $U$. Therefore for every $f\in F$ there is $d_{2}(\sigma_{n}(f),\sigma'_{n}(f))<\delta$ and $\operatorname{id}$ is an $(F,\delta)$-isomorphism. We finish the proof applying Lemma \ref{lem-entropy-of-sofic-approximation}.
\end{proof}
Combining Theorem \ref{characterisation} and Lemma \ref{free-group-sofic-app} we obtain
\begin{thm}\label{free group characterisation}
	Let $G$ be free group generated by a finite set. Then for every sofic sequence $\Sigma=\{\sigma_{n}\colon G\to\Sym(V_{n})\}$ there exists a sequence $K_{n}\searrow\{e\}$ of finitely indexed normal subgroups of $G$ and a sequence $(f_{n})_{n\in\nat}\subset\nat$ such that $\ddot{\Sigma}=\{ \ddot{\sigma}_{n}\colon G\to\Sym(\amalg _{f_{n}}G/K_{n}) \}$ is a sofic approximation and for every system $(X,G)$ we have $h_{\Sigma}(X,G)\leq h_{\ddot{\Sigma}}(X,G)$.
\end{thm}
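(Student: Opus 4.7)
The plan is to first convert $\Sigma$ into a sofic approximation by homomorphisms via Lemma~\ref{free-group-sofic-app}, then feed the result into Theorem~\ref{characterisation}, and finally modify the resulting kernels to form a decreasing sequence.

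Given an arbitrary sofic approximation $\Sigma=\{\sigma_n\colon G\to\Sym(V_n)\}$ of the free group $G$, Lemma~\ref{free-group-sofic-app} produces a sofic approximation by homomorphisms $\Sigma''=\{\sigma''_n\colon G\to\Sym(V_n)\}$ that is equivalent to $\Sigma$, so $h(\Sigma,\Sigma'')\geq 0$. By Lemma~\ref{lem-entropy-of-sofic-approximation} this yields $h_\Sigma(X,G)=h_{\Sigma''}(X,G)$ for every dynamical system $(X,G)$, so it suffices to bound $h_{\Sigma''}(X,G)$ from above by the entropy with respect to a suitable $\ddot{\Sigma}$ of the required form.

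Applying Theorem~\ref{characterisation} to $\Sigma''$ produces a sequence $(f_n)_{n\in\nat}$, orbit representatives $v^n_1,\ldots,v^n_{f_n}\in V_n$, and the subgroups $J_n:=\ker\sigma''_n$, which are normal of finite index in $G$ with $\bigcap_n J_n=\{e\}$. A priori $\{J_n\}$ need not be decreasing, so I would set $K_n:=J_1\cap\ldots\cap J_n$; this is again normal of finite index (as a finite intersection of such subgroups) and satisfies $K_n\searrow\{e\}$. Since $K_n\subset J_n\subset\stab_{\sigma''_n}(v^n_i)$ for every $i\leq f_n$, the coset projection $\amalg_{f_n} G/K_n\to V_n\cong\amalg_{f_n} G/\stab_{\sigma''_n}(v^n_i)$ remains well defined and $G$-equivariant. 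The pseudoorbit-lifting construction in the proof of Theorem~\ref{characterisation} uses only the normality of the chosen subgroups together with this containment in the stabilizers, so the argument goes through verbatim with $K_n$ in place of $J_n$. This yields a sofic approximation $\ddot{\Sigma}=\{\ddot{\sigma}_n\colon G\to\Sym(\amalg_{f_n} G/K_n)\}$, whose sofic character follows from Example~\ref{example-res-fin} combined with Lemma~\ref{lem-amalg}, and which satisfies $h_{\Sigma''}(X,G)\leq h_{\ddot{\Sigma}}(X,G)$.

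Chaining the two relations delivers $h_\Sigma(X,G)\leq h_{\ddot{\Sigma}}(X,G)$, as required. The only non-mechanical point is the passage from the kernels $J_n$, which merely have trivial intersection, to the genuinely decreasing sequence $K_n$; this requires re-reading rather than black-boxing the proof of Theorem~\ref{characterisation} to confirm that the pseudoorbit-lifting step still works for the finer normal subgroups, but no new idea is needed.
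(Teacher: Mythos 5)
Your proposal is correct and follows essentially the same route as the paper, which simply combines Lemma~\ref{free-group-sofic-app} (replace $\Sigma$ by an equivalent sofic approximation by homomorphisms) with Theorem~\ref{characterisation}. Your extra step of replacing the kernels $J_n=\ker\sigma''_n$ by $K_n=J_1\cap\dots\cap J_n$ to guarantee a genuinely decreasing sequence is a worthwhile refinement of a point the paper leaves implicit, and your check that the pseudoorbit-lifting argument of Theorem~\ref{characterisation} only uses $K_n\subseteq\stab_{\sigma''_n}(v^n_i)$ together with finite index and trivial intersection is exactly what is needed for it to go through.
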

\section{Entropy of subshifts over residually finite group}\label{Entropy of subshifts over residually finite group}
First, we need to introduce another definition of the sofic topological entropy, see \cite{Bowen2017} and \cite{Austin2016}. Assume that $\Sigma=\{\sigma_{n}\colon G \to V_{n}\}$ is a sofic approximation sequence for a sofic group $G$.

Let $X \subset k^{G}$ be a $G$-shift. Fix a function $\phi\colon V_{n}\to k$ and an element $v\in V_{n}$. Define \emph{the pullback} $\Pi_{v}^{\sigma_{n}}(\phi)\in k^{G}$ of $\phi$ by the formula $\Pi_{v}^{\sigma_{n}}(\phi)(g)=\phi(\sigma_{n}(g)^{-1}v)$. Given an open neighborhood $\mathcal{U}$ of $X$ in $k^{G}$ and $\delta>0$, let $\Omega(\sigma_{n}, \delta, \mathcal{U})$ be the set of all maps $\phi\colon V_{n}\to k$ such that
$$
|V_{n}|^{-1}|\{v\in V_{n}\colon \Pi_{v}^{\sigma_{n}}(\phi)\in \mathcal{U}\}| \geqslant 1-\delta.
$$
We call such a map a $(\sigma_{n},\delta,\mathcal{U})$-\emph{microstate}, or $(\delta,\mathcal{U})$-\emph{microstate} when no confusion can arise. Define the sofic topological entropy of $(X,G)$ by
$$
\tilde{h}_{\Sigma}(X,G)=\inf_{\delta>0}\inf_{\mathcal{U}\supset X}\limsup_{n\to\infty}|V_{n}|^{-1}\log |\Omega(\sigma_{n}, \delta, \mathcal{U})|.
$$
It turns out that this definition of the sofic entropy coincides with Definition \ref{entropy-def1}, proof can be found in \cite{Austin2016}, although it involves application of the sofic measure entropy and variational principle. Some intuitions on why those entropies should be equal were given by Bowen in \cite{Bowen2017}, but to the best of our knowledge formal direct proof never appeared in literature.
\begin{rem}
	In the case of a symbolic action, that is if $X$ is a $G$-shift, we will always use the following pseudometric
		\[
	d(x,y) =
	\left\{ \begin{array}{ll}
	1 & \textrm{ if }x_{e}\neq y_{e} ,\\
	0 & \textrm{ if }x_{e} = y_{e},
	\end{array} \right.
	\]where $x,y\in X$. It is easy to check that $d$ is indeed a continuous dynamically generating pseudometric on any $G$-shift.
\end{rem}
\begin{dfn}
	For a finite subset $F\subset G$ define $\proj_{F}\colon k^{G}\to k^{F}$ to be the projection $k^{G}\ni x\mapsto x|_{F}\in k^{F}$. From now on we will denote $\Uee_{F}:=\proj_{F}^{-1}(\proj_{F}(X))$ for any finite $F\subset G$. Moreover, for any $w\in k^{F}$ we define a cylinder $[w]$ based on $w$ by the formula	$ [w]= \proj_{F}^{-1}(w)$. Note that $\mathcal{U}_{F}=\bigcup_{w \in \mathcal{B}_{F}(X)}[w].$
\end{dfn}
\begin{lem}\label{U_F}
	For every $G-$shift $(X,G)$ over $k\in\nat$ and sofic approximation $\Sigma$ we have
	\[ 
	\tilde{h}_{\Sigma}(X,G)=\inf_{\delta>0}\inf_{F\subset G}\limsup_{n\to\infty}|V_{n}|^{-1}\log |\Omega(\sigma_{n}, \delta, \mathcal{U}_{F})|,
	 \]where $\Uee_{F}:=\proj_{F}^{-1}(\proj_{F}(X)).$
\end{lem}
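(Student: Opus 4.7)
The plan is to show that the family $\{\mathcal{U}_F : F \subset G \text{ finite}\}$ is cofinal in the family of open neighborhoods of $X$ in $k^G$, in the sense that every open neighborhood $\mathcal{U}$ of $X$ contains some $\mathcal{U}_F$. Granting this, both inequalities between the two expressions follow immediately from monotonicity of $|\Omega(\sigma_n,\delta,\cdot)|$ under inclusion of neighborhoods.

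First, since each $\mathcal{U}_F$ is a particular open neighborhood of $X$, we have the trivial inequality
\[
\inf_{\mathcal{U}\supset X}\limsup_{n\to\infty}|V_n|^{-1}\log|\Omega(\sigma_n,\delta,\mathcal{U})|\leq \inf_{F\subset G}\limsup_{n\to\infty}|V_n|^{-1}\log|\Omega(\sigma_n,\delta,\mathcal{U}_F)|.
\]
For the reverse direction, I will use the cofinality claim: given any open $\mathcal{U}\supset X$, pick a finite $F$ with $\mathcal{U}_F\subset\mathcal{U}$. By definition of the microstate sets, the inclusion of neighborhoods yields $\Omega(\sigma_n,\delta,\mathcal{U}_F)\subset \Omega(\sigma_n,\delta,\mathcal{U})$ for every $n$, hence $|\Omega(\sigma_n,\delta,\mathcal{U}_F)|\leq|\Omega(\sigma_n,\delta,\mathcal{U})|$. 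Taking $\limsup$ and then the infimum over $F$ on the left and over $\mathcal{U}$ on the right gives the reverse inequality. Finally, taking $\inf_{\delta>0}$ on both sides produces the claimed identity.

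The heart of the argument is thus the cofinality claim, which I would prove by a compactness argument: enumerate a nested exhaustion $F_1\subset F_2\subset\cdots$ of $G$ by finite sets with $\bigcup_n F_n=G$ (available since $G$ is countable). If no $\mathcal{U}_{F_n}$ were contained in $\mathcal{U}$, pick $y_n\in \mathcal{U}_{F_n}\setminus \mathcal{U}$ and, by definition of $\mathcal{U}_{F_n}$, pick $x_n\in X$ with $x_n|_{F_n}=y_n|_{F_n}$. Compactness of $k^G$ and of $X$ lets us pass to a common subsequence along which $y_n\to y\in k^G\setminus\mathcal{U}$ and $x_n\to x\in X$. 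For every fixed finite $F\subset G$ one has $F\subset F_n$ eventually, so $x_n|_F=y_n|_F$, and passing to the limit gives $x|_F=y|_F$; since this holds on every finite $F$, we get $x=y$, contradicting $x\in X\subset\mathcal{U}$ while $y\notin\mathcal{U}$.

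The only mild obstacle is making sure no assumption on $G$ beyond countability is needed (which is already built into the definition of sofic group here), and keeping straight which direction of the inequality comes from which inclusion; since neither is subtle, the proof is essentially a routine compactness plus monotonicity argument once the cofinality lemma is in hand.
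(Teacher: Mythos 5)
Your proposal is correct and follows essentially the same route as the paper: both reduce the statement to the fact that the sets $\mathcal{U}_F$ are cofinal among open neighborhoods of $X$ and then use monotonicity of $\Omega(\sigma_n,\delta,\cdot)$. The only difference is that the paper simply asserts the cofinality ``by compactness of $X$'' while you spell out the diagonal/compactness argument, which is a welcome addition but not a different method.
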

\begin{proof}
	Let $\delta>0$ and $\Uee$ be an open neighborhood of $X$ in $k^{G}$. By compactness of $X$ there exists $F\subset G$ such that $\Uee_{F}:=\proj_{F}^{-1}(\proj_{F}(X))\subset \Uee$. It implies that\[ 
	 \Omega(\sigma_{n}, \delta, \mathcal{U})\supset\Omega(\sigma_{n}, \delta, \mathcal{U}_{F})
	 \]and consequently\[ 
	 \tilde{h}_{\Sigma}(X,G)\geq\inf_{\delta>0}\inf_{F\subset G}\limsup_{n\to\infty}|V_{n}|^{-1}\log |\Omega(\sigma_{n}, \delta, \mathcal{U}_{F})|.
	  \]Since the reverse inequality is clearly true, we have proved the lemma.
\end{proof}

\begin{lem}\label{sofic-entropy-symbolic-action}
	Let $X\subset k^{G}$ be a $G$-shift. Then for any sofic approximation $\Sigma$ of $G$ there is
	$$
	\tilde{h}_{\Sigma}(X,G)=h_{\Sigma}(X,G).
	$$
\end{lem}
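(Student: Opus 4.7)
My plan is to translate between microstates $\phi \colon V_{n} \to k$ and approximate pseudoorbits $\varphi \colon V_{n} \to X$ via the natural identification $\varphi \mapsto (v \mapsto \varphi(v)_{e})$. Because the symbolic pseudometric $d$ only sees the $e$-coordinate, this assignment is an isometry for $d_{2}$, which lets me compare both entropies against the same $d_{2}$-counting on $k^{V_{n}}$. As preparation I invoke Lemma \ref{U_F} to compute $\tilde{h}_{\Sigma}$ via the cofinal family $\{\mathcal{U}_{F}\}$, and Theorem \ref{KerrLi} to compute $h_{\Sigma}$ via $d_{2}$.

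\textbf{Direction $h_{\Sigma}(X,G) \le \tilde{h}_{\Sigma}(X,G)$.} Given $F' \subset G$ finite and $\delta' > 0$, I show that every sufficiently fine pseudoorbit $\varphi \in \map(d, F, \delta, \sigma_{n})$ produces a microstate $\phi_{\varphi}(v) := \varphi(v)_{e}$ in $\Omega(\sigma_{n}, \delta', \mathcal{U}_{F'})$. The core identity, for each $s \in F'$, is $\varphi(v)_{s} = (\alpha_{s^{-1}}\varphi(v))_{e}$; combining the pseudoorbit bound $d_{2}(\alpha_{s^{-1}}\varphi, \varphi\sigma_{n,s^{-1}}) \le \delta$ with asymptotic multiplicativity (which forces $\sigma_{n,s^{-1}} = \sigma_{n,s}^{-1}$ off a negligible set) gives
\[
\varphi(v)_{s} = \varphi(\sigma_{n,s}^{-1} v)_{e} = \phi_{\varphi}(\sigma_{n}(s)^{-1}v) = \Pi_{v}^{\sigma_{n}}(\phi_{\varphi})(s)
\]
off a set of density $O(|F'|(\delta^{2}+o_{n}(1)))$. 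Intersecting over $s \in F'$ shows $\Pi_{v}^{\sigma_{n}}(\phi_{\varphi})|_{F'} = \varphi(v)|_{F'} \in \proj_{F'}(X)$ on a set of density $\ge 1 - \delta'$ once $\delta, 1/n$ are small, so indeed $\phi_{\varphi} \in \Omega(\sigma_{n}, \delta', \mathcal{U}_{F'})$. Since $d_{2}$ only sees the $e$-coordinate, $d_{2}(\varphi_{1}, \varphi_{2}) = d_{2}(\phi_{\varphi_{1}}, \phi_{\varphi_{2}})$, hence the assignment is injective on any $(d_{2}, \epsilon)$-separated set; this yields $N_{\epsilon}(\map(d, F, \delta, \sigma_{n}), d_{2}) \le |\Omega(\sigma_{n}, \delta', \mathcal{U}_{F'})|$, and the desired inequality follows by passing to $\limsup_{n}$, then to the infima, using Theorem \ref{KerrLi}.

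\textbf{Direction $\tilde{h}_{\Sigma}(X,G) \le h_{\Sigma}(X,G)$.} Fix $x_{0} \in X$; given $\phi \in \Omega(\sigma_{n}, \delta, \mathcal{U}_{F})$, for each $v$ with $\Pi_{v}^{\sigma_{n}}(\phi) \in \mathcal{U}_{F}$ pick $\tilde{\varphi}_{\phi}(v) \in X$ extending the pattern $\Pi_{v}^{\sigma_{n}}(\phi)|_{F}$, and set $\tilde{\varphi}_{\phi}(v) := x_{0}$ otherwise. The dual computation (using the same asymptotic identities) shows that $\tilde{\varphi}_{\phi} \in \map(d, F', \delta', \sigma_{n})$ for any prescribed $(F', \delta')$ once $F \supset F' \cup (F')^{-1}$ and $\delta, 1/n$ are small enough. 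Crucially $\tilde{\varphi}_{\phi}(v)_{e} = \phi(v)$ on the good set of density $\ge 1-\delta$, so $\phi$ is recovered from $\tilde{\varphi}_{\phi}$ up to an arbitrary modification on a Hamming ball of radius $\delta|V_{n}|$, of which there are at most
\[
\binom{|V_{n}|}{\lfloor \delta|V_{n}| \rfloor} k^{\lfloor \delta|V_{n}| \rfloor} \le \exp(c_{\delta}|V_{n}|), \qquad c_{\delta} \xrightarrow{\delta \to 0} 0.
\]
Packing the image of $\map$ in $k^{V_{n}}$ by $(d_{2}, \epsilon)$-balls, each of cardinality $\le \exp(c_{\epsilon}|V_{n}|)$ with $c_{\epsilon} \to 0$, then gives
\[
|\Omega(\sigma_{n}, \delta, \mathcal{U}_{F})| \le \exp((c_{\delta}+c_{\epsilon})|V_{n}|)\cdot N_{\epsilon}(\map(d, F', \delta', \sigma_{n}), d_{2}),
\]
and taking $\limsup_{n}$, infing over $(F, \delta)$ and $(F', \delta')$, then letting $\delta, \epsilon \to 0$ delivers $\tilde{h}_{\Sigma} \le h_{\Sigma}$.

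\textbf{Main obstacle.} The genuine difficulty lies in quantifier bookkeeping: the parameters $(F', \delta')$ chosen on one side must depend on $(F, \delta, \epsilon)$ of the other in a manner compatible with the order of the infima and suprema in the two entropy definitions, and the vanishing error exponents $c_{\delta}, c_{\epsilon}$ must be extracted \emph{before} those infima collapse the estimate. Edge cases in which $\map$ or $\Omega$ is eventually empty — giving entropy $-\infty$ — are covered by the same constructions, which send an empty source to an empty target.
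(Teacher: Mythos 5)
Your proposal is correct and follows essentially the same route as the paper: both directions are driven by the same translation between pseudoorbits $\varphi\colon V_{n}\to X$ and microstates $v\mapsto\varphi(v)_{e}$, with asymptotic multiplicativity/freeness used to identify $\Pi_{v}^{\sigma_{n}}(\phi)|_{F'}$ with $\varphi(v)|_{F'}$ off a negligible set. The one genuine divergence is in the direction $\tilde{h}_{\Sigma}\leq h_{\Sigma}$: the paper forces exact recoverability of the microstate by choosing $\phi(v)$ with $\phi(v)_{e}=\psi(v)$ even on the bad set, so the assignment $\psi\mapsto\Phi(\psi)$ is injective and carries $\Omega$ to a $(d_{\infty},\epsilon)$-separated family with no loss, whereas you send bad vertices to a fixed $x_{0}$ and pay for the resulting non-injectivity with the Hamming-ball factor $\binom{|V_{n}|}{\lfloor\delta|V_{n}|\rfloor}k^{\lfloor\delta|V_{n}|\rfloor}$ plus a $(d_{2},\epsilon)$-ball covering count, both subexponential and removed by letting $\delta,\epsilon\to0$ \emph{after} the infima over $(F',\delta')$ but before the final comparison with $h_{\Sigma}$. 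Your version costs a little more bookkeeping (and needs Theorem \ref{KerrLi} to work with $d_{2}$) but is arguably more robust, since it does not rely on being able to realize an arbitrary prescribed symbol at the $e$-coordinate of a point of $X$ on the bad set — a point the paper's construction quietly assumes. The quantifier order you describe is consistent with both entropy definitions, so the argument goes through.
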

\begin{proof}
	Let $\epsilon>0$. We will prove that $\tilde{h}_{\Sigma}(X,G)\geq h_{\Sigma}(X,G)$. Fix a finite set $F\subset G$, $\delta>0$ and $n\in\nat$ big enough so that the set
	\[ 
	\bar{V}:=\{v\in V_{n}~|~\sigma_{n}(g)^{-1}(v)=\sigma_{n}(g^{-1})v\text{, for every }g\in F\cup F^{-1}\}\text{ satisfies }|\bar{V}|\geq(1-\delta)|V_{n}|.
	 \]
	Let $\Uee_{F}:=\proj_{F}^{-1}(\proj_{F}(X))$ and let $\phi\colon V_{n}\to X$ be an $(F\cup F^{-1},\delta)-$pseudoorbit. We will show that $\psi=\Psi(\phi):=(\phi(v)_{e})_{v\in V_{n}}$ is a $(3\delta|F|,\Uee_{F})$-microstate. Let $g\in F\cup F^{-1}$. Since $d_{2}(g\phi,\phi\sigma_{g})<\delta$, there is $\ddot{V}_{g}\subset V_{n}$ such that $ \phi(\sigma_{n}(g)v)_{e}=(g\phi(v))_{e} $ for every $v\in\ddot{V}_{g}$ and $(1-\delta^2)|V_{n}|\geq |\ddot{V}_{g}|$. Therefore for every $v\in \bar{V}\cap \ddot{V}_{g}$ holds $\phi(\sigma_{n}(g)^{-1}v)_{e}=(g^{-1}\phi(v))_{e}$. We conclude that for every $v\in V':=\bar{V}\cap\bigcap_{g\in F\cup F^{-1}} \ddot{V}_{g}$ and for every $g\in F\cup F^{-1}$ we have $\phi(\sigma_{n}(g)^{-1}v)_{e}=(g^{-1}\phi(v))_{e}$. Note that we can estimate $|V'|\geq(1-3\delta|F|)|V_{n}|$. Fix $g\in F$, then for every $v\in V'$ we can write $$\Pi_{v}^{\sigma_{n}}(\psi)(g)=\phi(\sigma_{n}(g)^{-1}v)_{e}=(g^{-1}\phi(v))_{e}=\phi(v)_{g}.$$ Hence, we have $\Pi_{v}^{\sigma_{n}}(\psi)(g)\in \Uee_{F}$ on the set of at least cardinality $(1-3\delta|F|)|V_{n}|$ and $\psi$ is a $(3\delta|F|,\Uee_{F})$-microstate. Moreover, 
	$$
	\map(d,F\cup F^{-1},\delta,\sigma_{n})\ni\phi \mapsto \Psi(\phi)\in \Omega(\sigma_{n}, 3\delta|F|, \Uee_{F})
	$$ is injective on $(\epsilon, d_{\infty})$-separated sets for every $\epsilon>0$. Indeed, if $d_{\infty}(\phi_{1},\phi_{2})>0$ for $\phi_{1},\phi_{2}\in \map(d,F\cup F^{-1},\delta,\sigma_{n})$, then there exists $v\in V_{n}$ such that $\phi_{1}(v)_{e}\neq\phi_{2}(v)_{e}$, hence $$d_{\infty}(\Psi(\phi_{1}),\Psi(\phi_{2}))\geq d(\phi_{1}(v),\phi_{2}(v))>0.$$ Let $E\subset \map(d,F\cup F^{-1},\delta,\sigma_{n})$ be an $(\epsilon, d_{\infty})$-separated set of maximal cardinality. We have
	\[ 
	N_{\epsilon}(\map(d,F\cup F^{-1},\delta,\sigma_{n}),d_{\infty})=|E|\leq|\Psi(E)|\leq |\Omega(\sigma_{n}, 3\delta|F|, \mathcal{U}_{F})|\leq|\Omega(\sigma_{n}, 3\delta|F|, \mathcal{U}_{F})|.
	\] Let us take exponential growth of both sides, and then let $\delta\to0$ to learn that	 
\begin{multline*}
	\inf_{\delta>0}\limsup_{n\to\infty}\frac{1}{|V_{n}|}\log N_{\epsilon}(\map(d,F\cup F^{-1},\delta,\sigma_{n}),d_{\infty})\leq\\ \inf_{\delta>0}\limsup_{n\to\infty}\frac{1}{|V_{n}|}\log|\Omega(\sigma_{n}, 3\delta|F|, \mathcal{U}_{F})|\leq
	\limsup_{n\to\infty}\frac{1}{|V_{n}|}\log|\Omega(\sigma_{n}, \delta', \mathcal{U}_{F})|,
\end{multline*}
	holds for every $\delta'>0$. Now, taking infimum over all finite $F\subset G$ and letting $\delta'\to 0$, we obtain $h_{\Sigma}(X,G)\leq \tilde{h}_{\Sigma}(X,G)$.
	
	On the other hand, let $F\subset G$ be a finite set with $e\in F$ and $F'=F\cup F^{-1}$. Let $\psi\colon V_{n}\to k$ be a $(\delta,\Uee_{F'})$-microstate, where $\delta>0$. Find $n\in\nat$ big enough so that there exists $\bar{V}\subset V_{n}$ with $|\bar{V}|\geq(1-\delta) |V_{n}|$ such that $\sigma_{e}^{-1}=\id$ on $\bigcup_{g\in F'}\sigma_{g}(\bar{V})$ and $\sigma_{g^{-1}}^{-1}|_{\bar{V}}=\sigma_{g}|_{\bar{V}}$ for any $g\in F'$. If $\proj_{F'}^{-1}(\Pi_{v}^{\sigma_{n}}(\psi)|_{F'})\cap X\neq\emptyset$, define $\phi=\Phi(\psi)\colon V_{n}\to X$ in any way, so that condition $\phi(v)\in\proj_{F'}^{-1}(\Pi_{v}^{\sigma_{n}}(\psi)|_{F'})\cap X$ is satisfied and in case $\proj_{F'}^{-1}(\Pi_{v}^{\sigma_{n}}(\psi)|_{F'})\cap X=\emptyset$ choose $\phi(v)\in X$ so that $\phi(v)_{e}=\psi(v)$. We check that $\phi$ is an $(F',\sqrt{4|F|\delta})$-pseudoorbit. Fix $v\in \bar{V}$, $g\in F'$. Note that if for some $v\in V_{n}$ we have $\proj_{F'}^{-1}(\Pi_{v}^{\sigma_{n}}(\psi)|_{F'})\cap X=\emptyset$, then $\Pi_{v}^{\sigma_{n}}(\psi)\notin \Uee_{F'}$, so it can happen only on the set $\ddot{V}\subset V_{n}$ of cardinality at most $\delta|V_{n}|$. Put $V':=\bar{V}\setminus (\bigcap_{g\in F'}\sigma_{g}^{-1}(\ddot{V}))$. We have $|V'|\geq (1-4|F|\delta)|V_{n}|$. Assume that $v\in V'$ and compute	
	\begin{multline*}
	(\phi(\sigma_{g}(v)))_{e}=\Pi_{\sigma_{g}(v)}^{\sigma_{n}}(\psi)(e)=\psi(\sigma_{e}^{-1}(\sigma_{g}(v)))=\\\psi((\sigma_{g}(v)))=\psi((\sigma_{g^{-1}}^{-1}(v)))=
	\Pi_{v}^{\sigma_{n}}(\psi)(g^{-1})=\phi(v)_{g^{-1}}=(g\phi(v))_{e},
	\end{multline*}where in the first equality and the second to last equality we have used that $\phi(w)\in\proj_{F'}^{-1}(\Pi_{w}^{\sigma_{n}}(\psi)|_{F'})$ for every $w\in V'$.
	Therefore	
	 \begin{multline*}
		d_{2}(\phi\sigma_{g},g\phi)^2=\dfrac{1}{|V_{n}|}\sum_{v\in V_{n}}d(\phi(\sigma_{g}(v)),g\phi(v))^2=\\\frac{1}{|V_{n}|}\sum_{v\in V_{n}\setminus V'}d(\phi(\sigma_{g}(v)),g\phi(v))^2\leq \frac{|V_{n}\setminus V'|}{|V_{n}|}\leq 4|F|\delta
	\end{multline*} and $\phi$ is an $(F', \sqrt{4|F|\delta})$-pseudoorbit. As before, the mapping
	 \[
	  \Omega(\sigma_{n},\delta,\Uee_{F'})\ni \psi\mapsto\Phi(\psi)\in \map(d,F',\sqrt{4|F|\delta},\sigma_{n})
	  \]
	 has $(\epsilon, d_{\infty})$-separated image for every $\epsilon>0$, because $\psi(v)_{e}=\Phi(\psi)(v)_{e}$ for every $v\in V_{n}$. We can now estimate
	\[ 
	|\Omega(\sigma_{n},\delta,\Uee_{F'})|\leq N_{\epsilon}(\map(d,F',\sqrt{4|F|\delta},\sigma_{n}),d_{\infty}).
	 \]Let us apply exponential growth to get
\begin{align*}
	 \inf_{L\subset G}\limsup_{n\to\infty}\frac{1}{|V_{n}|}\log|\Omega(\sigma_{n},\delta,\Uee_{L})|&
	  \leq\limsup_{n\to\infty}\frac{1}{|V_{n}|}\log|\Omega(\sigma_{n},\delta,\Uee_{F'})|\\
	 &\leq\limsup_{n\to\infty}\frac{1}{|V_{n}|}\log N_{\epsilon}(\map(d,F',\sqrt{4|F|\delta},\sigma_{n}),d_{\infty})\\
	 &\leq\limsup_{n\to\infty}\frac{1}{|V_{n}|}\log N_{\epsilon}(\map(d,F,\sqrt{4|F|\delta},\sigma_{n}),d_{\infty}),
\end{align*}In the last inequality we have used that $F\subset F'$ implies $\map(d,F',\sqrt{4|F|\delta},\sigma_{n})\subset \map(d,F,\sqrt{4|F|\delta},\sigma_{n})$. Applying the appropriate limits, we obtain
\begin{multline*}
\tilde{h}_{\Sigma}(X,G)=\inf_{\delta>0}\inf_{L\subset G}\limsup_{n\to\infty}\frac{1}{|V_{n}|}\log|\Omega(\sigma_{n},\delta,\Uee_{L})|\leq\\
\sup_{\epsilon>0}\inf_{F\subset G}\inf_{\delta>0}\limsup_{n\to\infty}\frac{1}{|V_{n}|}\log N_{\epsilon}(\map(d,F,\sqrt{4|F|\delta},\sigma_{n}),d_{\infty})=h_{\Sigma}(X,G)
\end{multline*}
and the main claim follows.	 
\end{proof}
\begin{lem}\label{lemat-n!}
	For every $n\in \nat$ we have
	$$
	e\Big(\frac{n}{e}\Big)^{n}\leq n!\leq e n \Big(\frac{n}{e}\Big)^{n}.
	$$
\end{lem}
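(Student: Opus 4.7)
The plan is to prove both inequalities simultaneously by induction on $n$, reducing each inductive step to the classical two-sided bound
\[
\left(1+\frac{1}{n}\right)^{n}<e<\left(1+\frac{1}{n}\right)^{n+1},
\]
which is a standard fact about the exponential function (following, for example, from the strict concavity of $\log$ applied to $\log\bigl(1+\tfrac1n\bigr)$).

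For the base case $n=1$, both ends of the claimed chain collapse to $1$, since $e(1/e)^{1}=1=1!=e\cdot 1\cdot(1/e)^{1}$, so both inequalities hold (with equality). For the inductive step, suppose $e(n/e)^{n}\leq n!\leq en(n/e)^{n}$. Multiplying the lower bound by $n+1$ gives $(n+1)!\geq e(n+1)(n/e)^{n}$, and the desired inequality $(n+1)!\geq e((n+1)/e)^{n+1}$ rearranges to $\bigl((n+1)/n\bigr)^{n}\leq e$, which is precisely the left half of the classical bound. Similarly, multiplying the upper bound by $n+1$ gives $(n+1)!\leq e(n+1)n(n/e)^{n}$, and comparing with the target $(n+1)!\leq e(n+1)((n+1)/e)^{n+1}$ reduces, after cancellation, to $e\leq \bigl((n+1)/n\bigr)^{n+1}$, the right half of the classical bound.

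I would therefore organise the proof as two short inductions sharing the same base case, each concluded by one of the two halves of the $(1+1/n)$-bound. There is no real obstacle here: the only point worth handling carefully is the arithmetic rearrangement at each inductive step, where factors of $e$ and $n$ need to be tracked so that the step reduces cleanly to the stated exponential inequality. An alternative, essentially equivalent, route would be to integrate $\log x$ to compare $\log n!=\sum_{k=1}^{n}\log k$ with $\int_{1}^{n}\log x\,dx=n\log n-n+1$, but the induction argument above is shorter and avoids any discussion of integral estimates.
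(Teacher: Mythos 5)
Your argument is correct: the base case and both inductive steps check out, and each step does reduce exactly to one half of the classical bound $(1+\tfrac1n)^n < e < (1+\tfrac1n)^{n+1}$, which is itself standard. The paper states this lemma without any proof at all (it is a well-known elementary Stirling-type estimate), so there is nothing to compare against; your induction is a perfectly standard and complete way to supply the missing justification.
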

\begin{lem}\label{sofic-residually}
	Let $G$ be a residually finite group with a sofic approximation $\Sigma$ by homomorphisms, and $X\subset k^{G}$ be a $G$-shift. Suppose $\mathcal{F}=\{F_{n}\}_{n\in\mathbb{N}}$ is a telescoping sequence of fundamental domains of $H_{n}:=\ker\sigma_{n}\searrow\{e\}$ with the property that $\bigcup\mathcal{F}=G$. Then the sofic entropy of $X$ with respect to $\Sigma$ satisfies
	$$
	h_{\Sigma}(X,G)\leq\liminf_{n\to\infty}\frac{\log |\mathcal{B}_{F_{n}}(X)|}{|F_{n}|},
	$$
	where $\mathcal{B}_{F_{n}}=\{y_{|F_{n}}\colon y\in X\}$ are words in $X$ over $F_{n}$.
\end{lem}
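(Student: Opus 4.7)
The plan is to reduce to a sofic approximation with only free orbits via Theorem \ref{characterisation}, and then exploit the telescoping identity $F_n = A_N F_N$ to perform a block-counting argument over $|A_N|$ copies of $F_N$. First, by Lemma \ref{sofic-entropy-symbolic-action} one has $h_\Sigma(X,G)=\tilde h_\Sigma(X,G)$, and Theorem \ref{characterisation} provides a sequence $(f_n)\subseteq \nat$ such that $\ddot\Sigma = \{\ddot\sigma_n\colon G\to\Sym(\amalg_{f_n}G/H_n)\}$ is a sofic approximation satisfying $h_\Sigma(X,G) \leq h_{\ddot\Sigma}(X,G)$; the decisive gain is that every $\ddot\sigma_n(G)$-orbit is now a free copy of $G/H_n$ of size $|F_n|$. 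Combining Lemma \ref{U_F} with the cofinality of $\{F_N\}$ in the lattice of finite subsets of $G$ (which follows from $\bigcup F_N = G$), it suffices to prove that for every $N$ and every sufficiently small $\delta>0$,
\[
\limsup_{n\to\infty}\frac{\log|\Omega(\ddot\sigma_n,\delta,\mathcal{U}_{F_N})|}{f_n|F_n|}\;\leq\;\frac{\log|\mathcal{B}_{F_N}(X)|}{|F_N|}\;+\;\varepsilon(\delta,N),
\]
with $\varepsilon(\delta,N)\to 0$ as $\delta\to 0$ for fixed $N$; the conclusion then follows by taking $\inf_\delta$ then $\inf_N$ and using $\inf_N \leq \liminf_N$.

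Fix $N$ and $\delta$ with $\sqrt\delta|F_N|<1$, and take $n\geq N$. Iterating the telescoping identity $F_{m+1}=(F_{m+1}\cap H_m)F_m$ yields a disjoint product $F_n = A_N F_N$ with $A_N = F_n \cap H_N \subseteq H_N$ of cardinality $|F_n|/|F_N|$ (disjointness uses normality of $H_N = \ker \sigma_N$). Parametrizing each orbit $\{i\}\times G/H_n$ of $\ddot V_n$ by $F_n$, a microstate $\phi$ is encoded by $(\eta_i)_{i=1}^{f_n}$ with $\eta_i(f) = \phi(i,fH_n)$; set $\eta_i^a(g):=\eta_i(ag)$ for $a\in A_N,\; g\in F_N$, which is well-defined because $aF_N\subseteq A_N F_N = F_n$. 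Using $\ddot\sigma_n(g^{-1})(i,cH_n) = (i,cgH_n)$, a short computation shows
\[
\Pi_{(i,aH_n)}^{\ddot\sigma_n}(\phi)\big|_{F_N} = \eta_i^a,
\]
so the microstate condition at the base point $(i,aH_n)$ reads exactly $\eta_i^a \in \mathcal{B}_{F_N}(X)$, with no boundary correction.

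Let $\beta_i$ denote the number of bad base points inside orbit $i$; the global microstate inequality is $\sum_i \beta_i \leq \delta f_n |F_n|$. Two successive Markov estimates yield (i) at least $(1 - \sqrt\delta) f_n$ orbits are \emph{good}, with $\beta_i \leq \sqrt\delta |F_n|$, and (ii) on each good orbit at most $\sqrt\delta|F_N|\cdot|A_N|$ indices $a\in A_N$ can satisfy $\eta_i^a \notin \mathcal{B}_{F_N}(X)$. Counting good orbits block-by-block and bad orbits trivially gives
\[
|\Omega(\ddot\sigma_n,\delta,\mathcal{U}_{F_N})| \leq \binom{f_n}{\sqrt\delta f_n}\!\left[\binom{|A_N|}{\sqrt\delta|F_N||A_N|}|\mathcal{B}_{F_N}(X)|^{|A_N|}k^{\sqrt\delta|F_N|^2|A_N|}\right]^{f_n}\!k^{\sqrt\delta f_n|F_n|}.
\]
Taking logarithms, dividing by $|\ddot V_n|=f_n|F_n|$, and bounding the binomials via Lemma \ref{lemat-n!}, the main term is $\log|\mathcal{B}_{F_N}(X)|/|F_N|$ while all remaining contributions are dominated by $\sqrt\delta|F_N|\log k + \sqrt\delta\log(e/(\sqrt\delta|F_N|))$, which vanish as $\delta\to 0$ for fixed $N$; this is the required $\varepsilon(\delta,N)\to 0$.

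The main obstacle is twofold. First, one must invoke Theorem \ref{characterisation} to reduce to the free-orbit case, as only then does the parametrization of $\phi$ by $(\eta_i)$ work cleanly; for a general sofic-by-homomorphisms $\Sigma$ there is no uniform control on the proportion of $V_n$ lying in non-free orbits. Second, one must verify that the telescoping definition of $A_N$ produces the clean inclusion $aF_N \subseteq F_n$, which is precisely what translates the microstate condition at block base points to $\eta_i^a \in \mathcal{B}_{F_N}(X)$ with no boundary error. Without this observation, one would be forced to bound counts of $F_N$-locally admissible configurations on $F_n$, which for non-amenable $G$ can genuinely exceed $|\mathcal{B}_{F_n}(X)|$ and break the argument.
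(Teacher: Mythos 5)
Your proof is correct and follows essentially the same route as the paper: reduce via Theorem \ref{characterisation} to a sofic approximation on $\amalg_{f_n}G/H_n$, pass to the microstate formulation with the sets $\mathcal{U}_{F_N}$, and count microstates by decomposing each orbit into $F_N$-blocks coming from the telescoping structure of the fundamental domains. The only difference is bookkeeping: you tile $F_n$ exactly as $A_NF_N$ and run two Markov estimates, while the paper selects a dense $H_N$-coset per orbit by pigeonhole and exhibits a surjection onto $\Omega(\sigma_n,\delta,\mathcal{U}_{F_N})$; both yield the same main term $\log|\mathcal{B}_{F_N}(X)|/|F_N|$ with errors vanishing as $\delta\to 0$.
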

\begin{proof}
	Taking into account Lemma \ref{characterisation} it is enough to show inequality for sofic approximation of the form $\Sigma=\{ \sigma_{n}\colon G\to\Sym(\amalg _{f_{n}}G/H_{n}) \}$ for some $(f_{n})_{n\in\nat}\subset\nat$. Let $\epsilon>0$. Consider $F_{N}\subset G$, for some $N\in\nat$, and its associated open set  $\mathcal{U}_{F_{N}}=\bigcup_{w \in \mathcal{B}_{F_{N}}(X)}[w]$, $\delta >0$, and $n\in\mathbb{N}$ big enough so that $N<n$. Take any open neighborhood $\Uee$ of $X$ such that $\Uee\subset \Uee_{F_{N}}$. 
	
	Let $l\leq n$. We adopt the following notation: for $c\in \tilde{F}_{l}:=\amalg_{f_{n}}F_{l}$ we will write $cH_{l}:=(\iota(c),\kappa(c)H_{l})$. For any $c\in \tilde{F}_{l}$ the set $cH_{l}$ will be called a coset of $H_{l}$. Note that $G$ acts naturally on $G/H_{l}$ through left and right multiplication. As $G_{l}:=\amalg _{f_{n}}G/H_{l}$ can be considered as $f_{n}$ copies of $G/H_{l}$, it is natural to extend the action of $G$ on $G/H_{l}$ to the disjoint union. Let us write $g(cH_{l})=gcH_{l}$ and $(cH_{l})g=cgH_{l}$ for $c\in \tilde{F}_{l}$ and $g\in G$.
	We will write $(g\phi)(v)=\phi(vg^{-1})$ for $\phi\in k^{G_{l}}$, $g\in G$ and $v\in G_{l}$.
	Now suppose we are given a $(\sigma_{n},\delta,\mathcal{U}_{F_{N}})$-microstate $\psi\colon G_{n}\to k$. Let us compute $\Pi_{v}^{\sigma_{n}}(\psi)\in k^{G}$ for $v=cH_{n}\in G_{n}$, where $c\in \tilde{F}_{n}$:
	$$
	\Pi_{v}^{\sigma_{n}}(\psi)(g):=\psi(\sigma_{n}(g)^{-1}(v))=\psi(cgH_{n})=g^{-1}\psi(cH_{n}).
	$$
	Define $ S_{\psi}:=\{v\in G_{n}\colon \Pi_{v}^{\sigma_{n}}(\psi)\in \mathcal{U}_{F_{N}}\} $. Since $\psi$ is a $(\sigma_{n},\delta,\mathcal{U}_{F_{N}})$-microstate, by definition
	\begin{align*}
	|S_{\psi}|&=|\{v\in G_{n}\colon \Pi_{v}^{\sigma_{n}}(\psi)\in \mathcal{U}_{F_{N}}\}|=
	|\{cH_{n}\in G_{n}\colon c\in \tilde{F}_{n} \textsl{ and } \Pi_{cH_{n}}^{\sigma_{n}}(\psi)\in \mathcal{U}_{F_{N}}\}|\\
	&=|\{cH_{n}\in G_{n}\colon c\in \tilde{F}_{n} \textsl{ and } (g^{-1}(\psi(cH_{n})))_{g\in G}\in \mathcal{U}_{F_{N}}\}|\\
	&=|\{cH_{n}\in G_{n}\colon c\in \tilde{F}_{n} \textsl{ and } (g^{-1}(\psi(cH_{n})))_{g\in F_{N}}=w\textsl{ for some }w\in\mathcal{B}_{F_{N}}(X)\}|
	\\&\geqslant (1-\delta)|\tilde{F}_{n}|.
	\end{align*}
	Denote by $E_{\psi,i}=\iota^{-1}(i)\cap S_{\psi}$ the set of elements from $S_{\psi}$ that are in the $i$-th copy of $G/H_{n}$ for $i\leq f_{n}$. Note that on average there are $|E_{\psi,i}|/|F_{N}|$ elements from $E_{\psi,i}$ in a coset of $H_{N}$. Hence for every $i\leq f_{n}$ there exists an element $c_{\psi,i}\in \tilde{F}_{N}$ such that\[ 
	R_{\psi,i}:=\{c\in E_{\psi,i}~|~cH_{n}\subset c_{\psi,i}H_{N}\text{ and }(g^{-1}(\psi(cH_{n})))_{g\in F_{N}}=w\textsl{ for some }w\in\mathcal{B}_{F_{N}}(X)\}
	\] with $ |R_{\psi,i}|\geq \frac{|E_{\psi,i}|}{|F_{N}|}. $ Note that every $(\sigma_{n},\delta,\mathcal{U}_{F_{N}})$-microstate is determined on the set of cardinality at least $(1-\delta)|F_{n}|f_{n}$ by choosing for every $i\leq f_{n}$ and every $c\in R_{\psi,i}$ a word $w_{\psi,c}\in \mathcal{B}_{F_{N}}(X)$ and putting $(g^{-1}(\psi(cH_{n})))_{g\in F_{N}}=w_{\psi,c}$.
	
	Let $\mathcal{S}\subset G_{n}$ with $|\mathcal{S}|=(1-\delta)|F_{n}|f_{n}$. By the discussion above, for every $i\leq f_{n}$ there exists a set $R_{i}\subset \mathcal{S}_{i}:=\iota^{-1}(i)\cap \mathcal{S}\subset\mathcal{S}$ with cardinality $\lceil|\mathcal{S}_{i}|/|F_{N}|\rceil$ which is contained in some coset of $H_{N}$, that is $R_{i}\subset c(H_{N}/H_{n})$ for some $c\in\tilde{F}_{N}\cap\iota^{-1}(i)$. This choice defines a function 
	\[ \Theta\colon\{\mathcal{S}\subset G_{n} \text{ with } |\mathcal{S}|\geq(1-\delta)|F_{n}|f_{n}\}\ni\mathcal{S}\mapsto\Theta(\mathcal{S})=(R_{i})_{i=1}^{f_{n}}\subset\prod_{i=1}^{f_{n}}\iota^{-1}(i). \] Put $\mathcal{S}^{\Theta}=G_{n}\setminus \bigcup_{i=1}^{f_{n}}(R_{i}(\tilde{F}_{N}\cap\iota^{-1}(i)))$. It is a matter of a simple check that 
\[ 	|\bigcup_{i=1}^{f_{n}}R_{i}(\tilde{F}_{N}\cap\iota^{-1}(i))|=\sum_{i=1}^{f_{n}}|R_{i}||\tilde{F}_{N}\cap\iota^{-1}(i)|=\sum_{i=1}^{f_{n}}\lceil|\mathcal{S}_{i}|/|F_{N}|\rceil|\tilde{F}_{N}\cap\iota^{-1}(i)|\geq(1-\delta)|F_{n}|f_{n}, \]
	so $$
	|\mathcal{S}^{\Theta}|=|F_{n}|f_{n}-\sum_{i=1}^{f_{n}}|R_{i}||\tilde{F}_{N}\cap\iota^{-1}(i)|\leq \delta|F_{n}|f_{n}.
	$$ With the function $\Theta$ defined above, let us introduce the map
\begin{multline*}
	\Psi\colon\coprod_{\substack{\mathcal{S}\subset G_{n},\\|\mathcal{S}|=\lceil(1-\delta)|F_{n}|f_{n}\rceil}}\big(\prod_{i=1}^{f_{n}}\mathcal{B}_{F_{N}}(X)^{R_{i}}\big)\times k^{\mathcal{S}^{\Theta}}\ni ((w_{c})_{c\in R_{i},i=1,...,f_{n}},\gamma)\mapsto\\ \Psi((w_{c})_{c\in R_{i},i=1,...,f_{n}},\gamma)=\psi\in \Omega(\sigma_{n}, \delta, \mathcal{U}_{F_{N}}),
\end{multline*}	where $(\psi(cgH_{n}))_{g\in F_{N}}=w_{c}$ for $c\in R_{i}$, $i=1,...,f_{n}$ and $\psi(v)=\gamma(v)$ for $v\in \mathcal{S}^{\Theta}$.
	We will show that $\Psi$ is surjective. Fix $\psi \in \Omega(\sigma_{n}, \delta, \mathcal{U}_{F_{N}})$. As before, let $ S_{\psi}:=\{v\in G_{n}\colon \Pi_{v}^{\sigma_{n}}(\psi)\in \mathcal{U}_{F_{N}}\} $. Function $\Theta$ gives us sets $\Theta(S_{\psi})$ and $(S_{\psi})^{\Theta}$. Let $w_{\psi,c}:=(g^{-1}(\psi(cH_{n})))_{g\in F_{N}}$ for $c\in \Theta(S_{\psi})_{i}$ and $1\leq i\leq f_{n}$, moreover put $\gamma_{\psi}(v):=\psi(v)$ for $v\in (S_{\psi})^{\Theta}$. Our previous discussion implies that $\Psi((w_{\psi,c})_{c\in \Theta(S_{\psi})_{i},i=1,...,f_{n}},\gamma_{\psi})=\psi$. Hence $\Psi$ is surjective and we can estimate
\begin{multline*}
	|\Omega(\sigma_{n}, \delta, \mathcal{U}_{F_{N}})|\leq
	|\coprod_{\substack{\mathcal{S}\subset G_{n},\\|\mathcal{S}|=\lceil(1-\delta)|F_{n}|f_{n}\rceil}}\big(\prod_{i=1}^{f_{n}}\mathcal{B}_{F_{N}}(X)^{R_{i}}\big)\times k^{\mathcal{S}^{\Theta}}|\leq\\
	{{|F_{n}|f_{n}}\choose{\lceil(1-\delta)|F_{n}|f_{n}\rceil}} \big(\prod_{i=1}^{f_{n}}|\mathcal{B}_{F_{N}}(X)|^{|R_{i}|}\big) k^{|\mathcal{S}^{\Theta}|}|.
\end{multline*}Let us use Lemma \ref{lemat-n!} to bound ${{|F_{n}|f_{n}}\choose{\lceil(1-\delta)|F_{n}|f_{n}\rceil}}$ by $e^{\beta(\delta)|F_{n}|f_{n}}$, where $\beta(\delta)\to 0$ as $\delta\to0$. Additionally, taking the logarithm of both sides and  dividing by $|F_{n}|f_{n}$, we obtain
\begin{align*}
\frac{1}{|F_{n}|f_{n}} \log |\Omega(\sigma_{n}, \delta, \mathcal{U}_{F_{N}})|&\leq
\beta(\delta)+\frac{1}{|F_{n}|f_{n}} \log|\mathcal{B}_{F_{N}}(X)|\sum_{i=1}^{f_{n}}|R_{i}|+ \log k\frac{ |\mathcal{S}^{\Theta}|}{|F_{n}|f_{n}}|\\
 &\leq\beta(\delta)+\frac{1}{|F_{n}|f_{n}} \log|\mathcal{B}_{F_{N}}(X)|\sum_{i=1}^{f_{n}}
 |\mathcal{S}_{i}|/|F_{N}|+ \delta\log k\\
 &\leq\beta(\delta)+\frac{1}{|F_{n}|f_{n}} \frac{\log|\mathcal{B}_{F_{N}}(X)|}{|F_{N}|}
 |\mathcal{S}|+ \delta\log k\\
  &\leq\beta(\delta)+ (1-\delta)\frac{\log|\mathcal{B}_{F_{N}}(X)|}{|F_{N}|}
 + \delta\log k.
\end{align*}As a result we get
\[
\frac{1}{|F_{n}|f_{n}} \log |\Omega(\sigma_{n}, \delta, \mathcal{U}_{F_{N}})|\leq
\beta(\delta)+ (1-\delta)\frac{\log|\mathcal{B}_{F_{N}}(X)|}{|F_{N}|}
+ \delta\log k.
 \]
Now, let $N\to\infty$, $n\to\infty$ and apply infimum over all open neighborhoods  $\mathcal{U}$ of $X$,  and finally $\delta\to0$ to get
\[
\inf_{\delta>0}\inf_{\mathcal{U}\supset X}\limsup_{n\to\infty}\frac{1}{|F_{n}|f_{n}} \log |\Omega(\sigma_{n}, \delta, \mathcal{U}_{F_{N}})|\leq
\liminf_{N\to\infty}\frac{\log|\mathcal{B}_{F_{N}}(X)|}{|F_{N}|}.\qedhere
\]
\end{proof}
\section{Toeplitz elements}\label{toeplitz-elements}
	To start working with Toeplitz subshift, we need to introduce some additional tools. All the definitions from this section can be found in \cite{Krieger}. Let $k\in\nat$.
	\begin{dfn}
		Let $H$ be a normal subgroup of $G$ and $x\in k^{G}$. \emph{The $H$-Periodic part} of $x$ is defined as
		$$
		\per_{H}(x)=\{g \in G:  x_{h^{-1}g}=x_{g}\textrm{ for }h\in H\}.
		$$
		Moreover, for $i\in k$, we introduce sets
		$$
		\per_{H}(x,i)=\{g \in \per_{H}(x):  x_{g}=i\}.
		$$ Note that $\per_{H}(x,i)$ is a sum of all cosets of $H$ on which $x$ is constant. We will denote the cardinality of the set of such cosets by $\#\per_{H}(x,i)$.
	\end{dfn}
\begin{dfn}\label{6.2}
	Let $\{H_{n}\}_{n\in\nat}$ be a decreasing sequence of finitely indexed subgroups of $G$. We call an element $x\in k^{G}$ a Toeplitz element with respect to $\{H_{n}\}_{n\in\nat}$ if $G=\bigcup_{n\in\nat}\per_{H_{n}}(x)$.
\end{dfn}
\begin{dfn}	
	An element $S_{H}(x) \in \tilde{k}^{G}$, where $\tilde{k}:=k\cup \{*\}$, is called \emph{$H$-skeleton} of $x$ if
	\[
	{(S_{H}(x))}_{g}=\left\{ \begin{array}{ll} 
	x_g & \textrm{ for } g \in \per _{H},\\
	* & \textrm{ otherwise}.
	\end{array} \right.
	\]
	If for some $g\in G$ we have ${(S_{H}(x))}_{g}=*$, then $g$ is said to be in a hole of $S_{H}(x)$. We extend previous Definition \ref{6.2} by setting $\#\per_{H}(x,*):=\#\per_{H}(S_{H}(x),*)$.The $H$-holes are the cosets on which $S_{H}(x)\equiv*$. If $S_{H}(x)|_{v}=j$ for some $v\in G/H$ and $j\in k$, then we will say that $H$ is coloured in $j$.
\end{dfn}

We now state an interesting characterisation of residually finite groups, which is due to Fabrice Krieger. The following theorem is the reason why we are interested mainly in Toeplitz elements over residually finite groups, and not just sofic groups.
\begin{thm}\cite[Cor. 4.3.]{Krieger}
	Let $G$ be an infinite countable group and $k\geq2$. Then $G$ is residually finite if and only if there exists a Toeplitz element in $k^{G}$ with trivial stabilizer.
\end{thm}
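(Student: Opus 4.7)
The backward implication is straightforward. Suppose $x\in k^{G}$ is a Toeplitz element with respect to a decreasing sequence $\{H_n\}_{n\in\nat}$ of finite-index subgroups and with $\stab(x)=\{e\}$. I will show that $\bigcap_{n}H_n\subseteq\stab(x)$, which then forces $\bigcap_n H_n=\{e\}$ and hence residual finiteness. Indeed, pick $g\in\bigcap_n H_n$ and any $h\in G$. Since $G=\bigcup_n\per_{H_n}(x)$, there exists $n$ with $h\in\per_{H_n}(x)$, so $x$ is constant on the right coset $H_n h$. As $g^{-1}\in H_n$, we have $g^{-1}h\in H_n h$, hence $(gx)_h=x_{g^{-1}h}=x_h$; this holds for every $h\in G$, so $gx=x$ and $g\in\stab(x)=\{e\}$.

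For the forward implication, assume $G$ is residually finite and pick a chain $H_n\searrow\{e\}$ of finite-index normal subgroups (possible by the remark after the definition of residual finiteness). Enumerate $G=\{h_0,h_1,\ldots\}$ and $G\setminus\{e\}=\{g_1,g_2,\ldots\}$. By passing to a subsequence of $\{H_n\}$ I may assume that $g_n\notin H_n$ and $[G:H_n]>2^{n+3}$ for every $n$. I will inductively build partial colourings $S_n\in\tilde{k}^{G}$, each $H_n$-periodic on its coloured support and extending $S_{n-1}$, colouring only finitely many $H_n$-cosets at each step. Setting $x:=\lim_n S_n$ pointwise (well-defined, since positions once coloured stay coloured) will then produce the required Toeplitz element.

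At step $n$ I perform two operations. First, to secure the Toeplitz property, if $h_n$ is not yet coloured by $S_{n-1}$, I colour the coset $h_nH_n$ with any symbol from $k$; then every $h_m$ is coloured by step $m$, forcing $G=\bigcup_n\per_{H_n}(x)$. Second, to eliminate $g_n$ from $\stab(x)$, I pick two disjoint empty $H_n$-cosets of the form $cH_n$ and $g_n^{-1}cH_n$ — these are automatically distinct because $g_n\notin H_n$ and $H_n$ is normal — and colour them with two different symbols $a,a'\in k$ (possible since $k\geq 2$). Then in the limit $x_c=a\neq a'=x_{g_n^{-1}c}$, so $(g_nx)_c\neq x_c$ and $g_n\notin\stab(x)$. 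The main obstacle is the existence of such $c$: after step $n-1$ the number of coloured $H_n$-cosets is at most $\sum_{k<n}2[H_k:H_n]$, which is less than $[G:H_n]/4$ by the growth condition $[G:H_n]>2^{n+3}$; thus the collection $E\subset G/H_n$ of empty cosets satisfies $|E|>3[G:H_n]/4$, and since translation by $g_n$ is a bijection of $G/H_n$, inclusion–exclusion gives $|E\cap g_nE|>[G:H_n]/2>0$, supplying the desired $c$ and completing the induction.
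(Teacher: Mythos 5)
The paper offers no proof of this statement: it is quoted directly from Krieger's work (Cor.\ 4.3 of the cited reference), so there is nothing in the text to compare your argument against. Judged on its own, your proof is correct and essentially reproduces the standard explicit construction. The backward direction is right: $h\in\per_{H_n}(x)$ means $x$ is constant on $H_nh$, so any $g\in\bigcap_n H_n$ satisfies $x_{g^{-1}h}=x_h$ for every $h$, whence $\bigcap_n H_n\subseteq\stab(x)=\{e\}$ and the given chain witnesses residual finiteness. The forward direction's induction is also sound: passing to a subsequence with $g_n\notin H_n$ and $[G:H_n]>2^{n+3}$ is legitimate (the indices must tend to infinity because $G$ is infinite and $\bigcap_n H_n=\{e\}$), normality gives $cH_n\neq g_n^{-1}cH_n$ exactly when $g_n\notin H_n$, and colouring whole cosets constantly guarantees both the Toeplitz property (every $h_m$ lies in $\per_{H_j}(x)$ for the step $j\leq m$ at which it was coloured) and $g_n\notin\stab(x)$. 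Two cosmetic corrections: each step $k$ colours up to three, not two, cosets of $H_k$ (one for $h_k$ plus the separating pair), so the count should be $\sum_{k<n}3[H_k:H_n]$, and it should also absorb the coset $h_nH_n$ coloured in the first operation of step $n$ before the empty pair is sought; with your index growth this still leaves well over half of $G/H_n$ uncoloured, so the inclusion--exclusion step $|E\cap g_nE|\geq 2|E|-[G:H_n]>0$ survives unchanged.
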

\section{Entropy of Toeplitz systems}\label{Entropy of Toeplitz systems}
We are ready to formulate and prove main theorem.
\begin{lem}\label{lem1}
	Let $x \in k^G$ be a Toeplitz element and $\{H_n\}_{n=1}^{\infty}$, $\{L_n\}_{n=1}^{\infty}$ be non increasing sequences of finitely indexed subgroups of $G$, where $L_n \subset H_n$, then the limits $\lim_{n\to\infty}\frac{\#\per _{H_n}(x,*)}{[G:H_n]}$ and $\lim_{n\to\infty}\frac{\#\per _{L_n}(x,*)}{[G:L_n]}$ exists and satisfy $\lim_{n\to\infty}\frac{\#\per _{L_n}(x,*)}{[G:L_n]} \leq \lim_{n\to\infty}\frac{\#\per _{H_n}(x,*)}{[G:H_n]}$.
\end{lem}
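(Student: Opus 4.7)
The plan is to deduce both statements from the following single monotonicity lemma: for any two finite-index subgroups $L \subseteq H$ of $G$,
\[
\frac{\#\per_L(x,*)}{[G:L]} \leq \frac{\#\per_H(x,*)}{[G:H]}.
\]
To prove this, first observe that $\per_H(x) \subseteq \per_L(x)$: if $g \in \per_H(x)$ then $x_{h^{-1}g} = x_g$ for all $h \in H$, hence in particular for all $l \in L \subseteq H$, so $g \in \per_L(x)$. A direct check using the convention $(gx)_h = x_{g^{-1}h}$ shows that $\per_H(x)$ is a union of right cosets of $H$ and $\per_L(x)$ is a union of right cosets of $L$, and since $L \subseteq H$ every right coset of $H$ decomposes into exactly $[H:L]$ right cosets of $L$. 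Therefore each $H$-coset lying inside $\per_H(x)$ decomposes into $[H:L]$ many $L$-cosets, all of which lie in $\per_L(x)$. Passing to complements, every $L$-hole is contained in a unique $H$-hole, so the number of $L$-holes is at most $[H:L]$ times the number of $H$-holes, and dividing by $[G:L] = [G:H][H:L]$ yields the lemma.

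From this lemma, existence of the two limits is immediate by monotone convergence. Applied to $H_{n+1} \subseteq H_n$, it shows that $\bigl\{\tfrac{\#\per_{H_n}(x,*)}{[G:H_n]}\bigr\}_{n\in\nat}$ is a non-increasing sequence of non-negative reals, hence convergent; the same reasoning works for $\{L_n\}$. For the inequality between the two limits, note that for every $n \in \nat$ and every $m \geq n$ we have $L_m \subseteq L_n \subseteq H_n$, so the lemma gives
\[
\frac{\#\per_{L_m}(x,*)}{[G:L_m]} \leq \frac{\#\per_{H_n}(x,*)}{[G:H_n]}.
\]
Letting $m \to \infty$ on the left (using the limit just established) and then $n \to \infty$ on the right produces the desired inequality.

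The only genuinely nontrivial step is the coset-counting argument inside the lemma; in particular one must be attentive to the fact that $\per_H(x)$ is a union of \emph{right} cosets, which is what allows the argument to go through without any normality assumption on $H$ or $L$. Everything else reduces to the elementary facts that $\per_H(x) \subseteq \per_L(x)$ whenever $L \subseteq H$, and that a monotone bounded sequence of real numbers converges.
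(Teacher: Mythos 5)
Your proposal is correct and follows essentially the same route as the paper: the paper's proof rests on exactly the inequalities $\#\per_{H_n}(x,*)\leq \#\per_{H_{n-1}}(x,*)[H_{n-1}:H_n]$ and $\#\per_{L_n}(x,*)\leq \#\per_{H_n}(x,*)[H_n:L_n]$, which are precisely your coset-counting lemma, followed by monotone convergence and a passage to the limit. The only difference is that you spell out the right-coset decomposition that the paper leaves implicit (a worthwhile clarification, since the lemma does not assume normality), and you route the final inequality through $L_m\subseteq L_n\subseteq H_n$ with $m\to\infty$ first, where the paper simply compares the $n$-th terms directly.
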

\begin{proof}
	Notice that the sequence $\{\frac{\#\per _{H_n}(x,*)}{[G:H_n]}\}_{n=1}^{\infty}$ is non increasing, since for each $n\in \mathbb{N}$ we have 
	$$\frac{\#\per _{H_n}(x,*)}{[G:H_n]}\leq \frac{\#\per _{H_{n-1}}(x,*)[H_{n-1}:H_{n}]}{[G:H_{n-1}][H_{n-1}:H_{n}]}\leq\frac{\#\per _{H_{n-1}}(x,*)}{[G:H_{n-1}]},$$ 
	so both limits exists. Moreover 
	$$\lim_{n\to\infty}\frac{\#\per _{L_n}(x,*)}{[G:L_n]} \leq \lim_{n\to\infty}\frac{\#\per _{H_n}(x,*)[H_{n}:L_{n}]}{[G:H_n][H_{n}:L_{n}]}=\lim_{n\to\infty}\frac{\#\per _{H_n}(x,*)}{[G:H_n]}.\qedhere$$
\end{proof}
\begin{dfn}
	Let $G=\langle S\rangle$ be a free group generated by a finite set $S$ and let $\Sigma=\{\sigma\colon G\to \Sym(V_{n})\}_{n\in\nat}$ be a sofic approximations. For every $n\in \nat$ we define the stabilizer of $\sigma_{n}$ by $\stab
	\sigma_{n}:=\bigcap_{v\in V_{n}}\stab_{\sigma_{n}} (v)$, where
	\begin{multline*}
	\stab_{\sigma_{n}} (v):=\{g\in\langle S\rangle~|~\sigma_{n}(s_{1})^{\alpha_{1}}\circ\ldots\circ\sigma_{n}(s_{m})^{\alpha_{m}}v=v\text,\\\text{ where } g=\prod_{i=1}^{m}s_{i}^{\alpha_{i}}\in G\text{ is a reduced word in }\langle S\rangle,\text{for some }m\in\nat\}.
	\end{multline*} 
\end{dfn}
We are ready to formulate the main result. It is a sofic counterpart of \cite[Lem. 5.4.]{KriegerTheorem}.
\begin{thm}\label{main}
	Suppose that $G$ is a residually finite group. Let $x \in k^G$ be a Toeplitz element, with respect to the sequence of finitely indexed normal subgroups $H_n \searrow \{e\}$, put $X=\overline{Gx}$. Then we have
\begin{equation}\label{bound-entropy}
	h_{\Sigma}(X,G) \leq \liminf_{n\to\infty}\frac{1}{[G:H_n]}\sum_{v\in G/H_{n}}\log |x(v)|
	\leq \lim_{n\to\infty}\frac{\#\per _{H_n}(x,*)}{[G:H_n]}\log k,
\end{equation}
	for any sofic approximation $\Sigma=\{\sigma_{n}\}_{n\in\nat}$ by homomorphisms such that $\ker \sigma_{n}\subset H_{n}$ for every $n\in\nat$. Additionally, in the case of a free group $\langle S\rangle$ generated by $S$, the inequality holds for any sofic approximation $\Sigma=\{\sigma_{n}\}_{n\in\nat}$ such that $\stab \sigma_{n}\subset H_{n}$ for every $n\in\nat$.
\end{thm}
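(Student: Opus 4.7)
The plan is to reduce to Lemma~\ref{sofic-residually}. In the general residually-finite case $\Sigma$ is already a sofic approximation by homomorphisms, so I apply Theorem~\ref{characterisation} directly; in the free-group case I first invoke Theorem~\ref{free group characterisation}. Either way I obtain a sofic approximation $\ddot{\Sigma}=\{\ddot{\sigma}_n \colon G\to\Sym(\amalg_{f_n}G/K_n)\}$ by homomorphisms, with $h_\Sigma(X,G) \leq h_{\ddot{\Sigma}}(X,G)$ and $K_n := \ker\ddot{\sigma}_n$ equal to $\ker\sigma_n$ in the first case and to $\stab\sigma_n$ in the second (by inspecting the constructions in the respective proofs). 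The hypothesis of the theorem therefore gives $K_n \subset H_n$. I then fix a telescoping sequence $\{F_n\}$ of fundamental domains for $K_n$ with $\bigcup F_n = G$. Since every $H_n$-coset is a disjoint union of exactly $[H_n:K_n]$ cosets of $K_n$ and $F_n$ contains one representative per $K_n$-coset, $F_n$ meets every $H_n$-coset in exactly $[H_n:K_n]$ points.

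The heart of the argument will be the combinatorial estimate
\begin{equation*}
|\mathcal{B}_{F_n}(X)| \leq [G:H_n]\prod_{v\in G/H_n} |x(v)|^{[H_n:K_n]},
\end{equation*}
where $|x(v)|$ denotes the number of distinct values $x$ takes on the coset $v$. Any $w\in\mathcal{B}_{F_n}(X)$ has the form $w=(gx)|_{F_n}$, i.e.\ $w(f)=x(g^{-1}f)$ for some $g\in G$. Normality of $H_n$ implies that $v_f:=g^{-1}fH_n$ depends only on $gH_n$ and $fH_n$, and as $f$ ranges over $F_n$ the multiset $\{v_f\}_{f\in F_n}$ consists of each $H_n$-coset exactly $[H_n:K_n]$ times. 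Fixing the coset $gH_n$ and writing $g=g_0h$ with $h\in H_n$, the value $x(g^{-1}f)=x(h^{-1}g_0^{-1}f)$ ranges over $\{x(g'):g'\in v_f\}$, a set of cardinality $|x(v_f)|$. A naive independent-choice bound per position then yields at most $\prod_{f\in F_n}|x(v_f)|=\prod_{v\in G/H_n}|x(v)|^{[H_n:K_n]}$ words per coset $gH_n$; summing over the $[G:H_n]$ choices of $gH_n$ gives the claim.

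Taking logarithms, dividing by $|F_n|=[G:H_n][H_n:K_n]$, and using that $\log[G:H_n]/|F_n|\to 0$ (which holds because $[G:K_n]\to\infty$), I obtain
\begin{equation*}
\liminf_{n\to\infty}\frac{\log|\mathcal{B}_{F_n}(X)|}{|F_n|}\leq\liminf_{n\to\infty}\frac{1}{[G:H_n]}\sum_{v\in G/H_n}\log|x(v)|.
\end{equation*}
Combining this with Lemma~\ref{sofic-residually} applied to $\ddot{\Sigma}$ and the reduction $h_\Sigma\leq h_{\ddot{\Sigma}}$ yields the first inequality in \eqref{bound-entropy}. The second inequality is then immediate: $|x(v)|=1$ on non-hole cosets and $|x(v)|\leq k$ on hole cosets of $H_n$, so $\sum_v\log|x(v)|\leq\#\per_{H_n}(x,*)\log k$, and the limit on the right-hand side exists by Lemma~\ref{lem1}.

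The main obstacle is the combinatorial bound on $|\mathcal{B}_{F_n}(X)|$: one has to exploit normality of $H_n$ to verify rigorously that the hole pattern transports equivariantly under left translation and to keep the bookkeeping uniform across the $[H_n:K_n]$ representatives of each $H_n$-coset sitting inside $F_n$. Everything else is a fairly mechanical assembly of Theorem~\ref{characterisation} (or Theorem~\ref{free group characterisation}), Lemma~\ref{sofic-residually}, and Lemma~\ref{lem1}.
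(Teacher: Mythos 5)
Your proposal is correct and follows essentially the same route as the paper: pass to a sofic approximation over $\amalg_{f_n}G/K_n$ via Theorem \ref{characterisation} (resp.\ Theorem \ref{free group characterisation} in the free case), invoke Lemma \ref{sofic-residually} with a telescoping sequence of fundamental domains of $K_n$, and finish with a combinatorial count of $|\mathcal{B}_{F_n}(X)|$ using normality of $H_n$. The only cosmetic difference is that the paper first replaces $H_n$ by $K_n=\ker\sigma_n$ via Lemma \ref{lem1} and bounds $|\mathcal{B}_{F_n}(X)|\leq |F_n|\prod_{v\in G/K_n}|x(v)|$ over $K_n$-cosets, whereas you keep $H_n$ and carry the exponent $[H_n:K_n]$ through; the two bookkeepings give the same bound after dividing by $|F_n|$.
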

\begin{proof}
	Since $\ker \sigma_{n}\subset H_{n}$, by Lemma \ref{lem1} it suffices to prove the theorem for a Toeplitz element with respect to $\{\ker\sigma_{n}\}_{n\in\nat}$. By Lemma \ref{characterisation} we know that there exists a sequence $(f_{n})_{n\in\nat}\subset \nat$ such that $\Gamma=\{\gamma\colon G\to\Sym(\amalg_{f_{n}}G/\ker\sigma_{n})\}_{n\in\nat}$ is a sofic approximation sequence and $h_{\Sigma}(X,G)\leq h_{\Gamma}(X,G)$. Choose a telescoping increasing sequence of fundamental domains $\mathcal{F}=\{F_{n}\}_{n\in\mathbb{N}}$ corresponding to $\{G/\ker\sigma_{n}\}_{n\in\nat}$ such that $\bigcup \mathcal{F}=G$. It follows from Lemma \ref{sofic-residually} that $h_{\Gamma}(X,G)\leq\liminf_{n\to\infty}\frac{\log |\mathcal{B}_{F_{n}}(X)|}{|F_{n}|}$. It is clear, that $|\mathcal{B}_{F_{n}}(X)|\leqslant |F_{n}|\prod_{v\in G/H_{n}}|x(v)|$. Recall that $v\in G/H_{n}$ is a set and by $x(v)$ we understand the image of $v$ through $x\in k^{G}$. Taking exponential growth we obtain desired inequality.
	
	To cope with the case of a free group, note that by Theorem \ref{free group characterisation} and Lemma \ref{free-group-sofic-app} for any sofic approximation $\Sigma$ we can construct a sofic approximation by homomorphisms $\ddot{\Sigma}=\{\ddot{\sigma}_{n}\}_{n\in\nat}$ such that $h_{\Sigma}(X,G)=h_{\ddot{\Sigma}}(X,G)$ and $\ddot{\sigma}_{n}(s)=\sigma_{n}(s)$ for every $s\in S$ and $n\in\nat$. Consequently, if we write every $g\in G$ as a reduced word, we can prove that
	\[ \stab_{\ddot{\sigma}_{n}}(v)=\{g\in\langle S\rangle~|~\ddot{\sigma}_{n}(g)v=v\}=\stab_{\sigma_{n}}(v) \]
	and we can express the kernel of $\ddot{\sigma}_{n}$ as
	\[ 
	\ker\ddot{\sigma}_{n}=\bigcap_{v\in V_{n}}\stab_{\ddot{\sigma}_{n}}(v)=\bigcap_{v\in V_{n}}\stab_{\sigma_{n}}(v)= \stab \sigma_{n}.
	 \]Now, the case of free groups follows from our earlier discussion.
\end{proof}
Inequality \eqref{bound-entropy} is the best possible in the sense of the following theorem.
\begin{thm}\label{toeplitz-example}
	If $H_n\searrow \{e\}$ is a sequence of finitely indexed normal subgroups of $G$, 
	$\{a_n\}_{n=1}^{\infty}$ is a sequence of positive numbers satisfying $a_n<a_{n-1}[H_{n-1}:H_{n}]$, the limit $\lim_{n\to\infty}\frac{a_n}{[G:H_n]}=\theta$ exists and satisfies $\theta<1$, then for any $k>1$ there exists a Toeplitz element $x\in k^{G}$, such that $$h_{\Sigma}(\overline{Gx},G)= \lim_{n\to\infty}\frac{a_n}{[G:H_n]}\log k=\lim_{n\to\infty}\frac{\#\per _{H_n}(x,*)}{[G:H_n]}\log k,$$where $\Sigma=\{\sigma_{n}\}_{n\in\nat}$ is a sofic approximation given by $ \sigma_{n}\colon G\to \Sym(G/H_{n})$ for every $n\in\nat$ and $\#\per _{H_n}(x,*)=a_{n}$ for every $n\in\nat$.
\end{thm}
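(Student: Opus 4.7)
The plan is to explicitly construct a Toeplitz element $x\in k^{G}$ satisfying $\#\per_{H_n}(x,*)=a_n$ together with the additional \emph{universality property}: for every $n$ and every filling $\eta$ of the level-$n$ hole cosets by elements of $k$, the $H_n$-periodic element $y_\eta\in k^{G}$ defined by $y_\eta(g)=x(g)$ on $\per_{H_n}(x)$ and $y_\eta(g)=\eta(cH_n)$ on each hole coset $cH_n$ belongs to $X:=\overline{Gx}$. Once such an $x$ is available, the upper bound from Theorem~\ref{main} and a direct microstate lower bound will yield matching bounds on $h_{\Sigma}(X,G)$.

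I build $x$ by induction. At stage $n$, $x$ has been defined on the complement of $a_n$ cosets of $H_n$ in an $H_n$-periodic way. To pass to stage $n+1$, each level-$n$ hole $cH_n$ splits into $[H_n:H_{n+1}]$ cosets of $H_{n+1}$; I color all but a total of $a_{n+1}$ of them, which is possible thanks to $a_{n+1}<a_n[H_n:H_{n+1}]$. The colorings are chosen so as to (i) eventually color every $g\in G$, ensuring the Toeplitz condition $\bigcup_{n}\per_{H_n}(x)=G$, and (ii) progressively secure the universality property. For (ii), note that $y_\eta$ agrees with $x$ outside the holes and $x$ is $H_n$-periodic there, so checking $y_\eta\in X$ on a finite window reduces to finding $g\in H_n$ such that the level-$n''$ color of $x$ at $g^{-1}t$ equals $\eta(cH_n)$ whenever $t\in cH_n$ is a hole position. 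At a stage $n''$ with $[H_n:H_{n''}]\geq k^{a_n}$, which is always reached since $[G:H_n]\to\infty$, the quotient $H_n/H_{n''}$ has enough cosets to accommodate all $k^{a_n}$ fillings $\eta$ at once, and a direct combinatorial allocation of the colorings of the level-$n''$ sub-cosets inside each level-$n$ hole realizes every $\eta$ for some coset of $H_{n''}$ in $H_n$. A diagonal bookkeeping over the countable list of pairs $(n,\eta)$ and finite windows $T\subset G$ secures all these requirements jointly.

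Given such $x$, the upper bound $h_{\Sigma}(X,G)\leq\theta\log k$ is immediate from Theorem~\ref{main} since $\ker\sigma_n=H_n$ and $\#\per_{H_n}(x,*)=a_n$. For the matching lower bound I work with $\tilde h_{\Sigma}$ via Lemma~\ref{sofic-entropy-symbolic-action}. For each filling $\eta$, the $H_n$-periodicity of $y_\eta$ lets me define $\psi_\eta\colon G/H_n\to k$ by $\psi_\eta(gH_n):=y_\eta(g)$. Using $\sigma_n(g)^{-1}(cH_n)=cgH_n$, I compute
\[
\Pi_{cH_n}^{\sigma_n}(\psi_\eta)(g)=\psi_\eta(cgH_n)=y_\eta(cg)=(c^{-1}y_\eta)(g),
\]
so $\Pi_v^{\sigma_n}(\psi_\eta)=c^{-1}y_\eta\in X$ for every $v=cH_n$, by universality. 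Hence each $\psi_\eta$ lies in $\Omega(\sigma_n,\delta,\mathcal{U})$ for every $\delta>0$ and every open $\mathcal{U}\supset X$, and distinct $\eta$ give distinct $\psi_\eta$ because they disagree on the hole cosets. Therefore $|\Omega(\sigma_n,\delta,\mathcal{U})|\geq k^{a_n}$ and
\[
\tilde h_{\Sigma}(X,G)\geq\lim_{n\to\infty}\frac{a_n\log k}{[G:H_n]}=\theta\log k,
\]
which together with the upper bound gives equality.

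The hard part will be the construction of $x$ with the universality property. Handling the exact hole count and Toeplitzness in isolation is straightforward bookkeeping, but meeting the countable family of universality requirements simultaneously, without violating the prescribed count $a_n$ of level-$n$ holes at each stage, demands a careful diagonal scheme: at the stage chosen for each fixed $n$, one must distribute the $a_n[H_n:H_{n''}]-a_{n''}$ available colorings among the $[H_n:H_{n''}]$ sub-cosets of $H_{n''}$ inside each level-$n$ hole so that every one of the $k^{a_n}$ patterns $\eta$ is realized by at least one coset shift, and this has to be done consistently over all $n$. This is essentially a non-commutative adaptation of Krieger's construction in \cite{KriegerTheorem}, where the absence of F\o{}lner-type fundamental domains forces one to work directly with the coset combinatorics of $H_{n-1}$ refined by $H_n$.
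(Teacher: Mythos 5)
Your overall architecture is sound and your lower bound is a genuinely different route from the paper's: you extract $k^{a_n}$ distinct microstates $\psi_\eta\in\Omega(\sigma_n,\delta,\mathcal U)$ from the $H_n$-periodic completions $y_\eta$ and invoke Lemma \ref{sofic-entropy-symbolic-action}, whereas the paper never needs any limit point $y_\eta$ to belong to $X$: it builds, inside the construction itself, explicit translating elements $w_\gamma^l\in H_{\beta_l}$ with $((w_\gamma^l)^{-1}x)|_{\mathcal W_l}=\gamma$, and then exhibits $k^{a_{\beta_l}}$ exact $(F,\delta)$-pseudoorbits $\phi_\gamma(gK_n)=g^{-1}(w_\gamma^n)^{-1}x$ with values in the orbit $Gx$, separated in $d_\infty$. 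Your microstate count is cleaner once universality is available, but it forces you to verify a strictly stronger property: for each fixed $n$ and each of the $k^{a_n}$ fillings $\eta$ you must realize $\eta$ on \emph{arbitrarily large} windows (so that $c^{-1}y_\eta$ locally looks like $X$ at essentially every coset $c$, not merely at a $(1-\delta)$-fraction --- the cosets meeting a hole have density about $\theta|F|$, which is not small), while the paper only ever needs each filling realized on the single window $F_{\beta_l}^2$ at stage $l$ and never revisits level $\beta_l$.

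The genuine gap is that the construction of $x$ --- which is the entire content of the theorem and occupies almost all of the paper's proof --- is asserted and sketched but not carried out, and the sketch glosses over exactly the points where the work lies. First, the raw hypothesis $a_{n''}<a_n[H_n:H_{n''}]$ only guarantees one spare coset to color; the paper must pass to subsequences of $\{H_n\}$, $\{a_n\}$, $\Sigma$ (legitimate, since neither the entropy nor the limit changes) to secure room of order $k^n|F_n|$ at each stage, and your counting of "$a_n[H_n:H_{n''}]-a_{n''}$ available colorings" needs the same reduction made explicit. Second, realizing a filling $\eta$ requires a \emph{single} $g_\eta\in H_n$ working simultaneously across all $a_n$ hole cosets, with the monochromatic patches $g_\eta(cH_n\cap T)$ consisting of positions that are still uncolored at the stage where you act; this forces the reserved holes at every intermediate level to be positioned in a coordinated way (the role of the products $q_{\gamma_j}^j\cdots q_{\gamma_1}^1$ and of the set $Q_l$ in the paper), and it must be done while keeping the hole count exactly $a_m$ at every intermediate level $m$ and still coloring every $g\in G$ eventually. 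None of this is impossible --- it is essentially the paper's induction pushed to larger windows --- but "a careful diagonal scheme" is precisely the theorem, and without it the proof is incomplete.
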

\begin{proof}
	Note that taking subsequence of $\{H_{n}\}_{n\in\nat}$, $\{a_{n}\}_{n\in\nat}$ and $\Sigma$, we do not change neither the entropy $h_{\Sigma}(Y,G)$, for any system $(Y,G)$, nor the limit $\lim_{n\to\infty}\frac{a_n}{[G:H_n]}$. If $\theta=0$, we take as a Toeplitz element constant sequence. Assume that $\theta>0$. Therefore, $a_{n}$ cannot be bounded, so taking a subsequence of $\{H_{n}\}_{n\in\nat}$, $\{a_{n}\}_{n\in\nat}$ and $\Sigma$ if necessary, we can assume that $a_{n}>\omega_{n}$ for $\omega_{n}=2k^{n}|F_{n}|$ and $[H_{n}:H_{n+1}]>k$ for every $n\in\nat$.	Number $a_n$ is supposed to indicate, at least asymptotically, cardinality of holes in the $n$-th step of the construction of $x$. Condition $a_n<a_{n-1}[H_{n-1}:H_{n}]$ is equivalent to the statement that, if on the $(n-1)$-th step there was $a_{n-1}$ holes of $H_{n-1}$, then during $n$-th step we ought to colour at least one hole of $H_{n}$. Hence, choosing appropriate subsequences, we can assume that in the $n$-th step we can colour $k^{n}|F_{n}|$ holes, analytically it means that $k^{n}|F_{n}|\leq a_{n-1}[H_{n-1}:H_{n}]-a_n$.
	
	Let $\{F_{n}\}_{n\in\nat}$ be a sequence of fundamental domains of groups $\{H_{n}\}_{n\in\nat}$, such that for every $n\in\nat$ there is $F_{n}=F_{n-1,n}F_{n-1}$, where $F_{n-1,n}$ is a fundamental domain of $H_{n-1}/H_{n}$.
	
	Enumerate G as $G=\{g_l\}_{l=1}^{\infty}$. It is enough to define $S_{H_{n}}(x)$ for each $n\in\mathbb{N}$. We proceed inductively. Let $\beta_{1}=a_{1}$ and $\beta_{l+1}=\beta_{l}+ a_{\beta_{l}}+m_{l}$, where $(m_{l})_{l\in\nat}$ will be specified later. We will inductively define a Toeplitz element $x$, defining for each $l\in\mathbb{N}$ $H_{i}$-skeleton of $x$ for $i \in (\beta_{l},\beta_{l+1}]$. We will also define the family $W^{l}=\{w_{\gamma}^{l}\}_{\gamma\in k ^{\mathcal{W}_{l}}}\subset F_{\beta_{l}+m_{l},\beta_{l+1}}$, where $\mathcal{W}_{l}=\{s_j\}_{j=1}^{a_{\beta_{l}}}\subset F_{\beta_{l}}$ is the set containing all those $s\in F_{\beta_{l}}$ such that $S_{H_{\beta_{l}}}(x)_{|sH_{\beta_{l}+m_{l}}}=*$. The family $W^{l}$ will have the property, that for every $\gamma\in k ^{\mathcal{W}_{l}}$ and $s\in \mathcal{W}_{l}$ we have $S_{H_{\beta_{l+1}}}(x)|_{w_{\gamma}^{l}s H_{\beta_{l+1}}}=\gamma(s)$.
	Assume, that $S_{H_{\beta_{l}}}(x)$ is defined. Let $Q_{l}=F_{\beta_{l}}^{2}\setminus F_{\beta_{l}}$ be the set of all elements $q\in F_{\beta_{l}}^{2}\setminus F_{\beta_{l}}$ such that $S_{H_{\beta_{l}}}(x)|_{q H_{\beta_{l}}}= *$. Choose $m_{l}\in\nat$ big enough so that $Q_{l}\subset F_{\beta_{l}+m_{l}}$, in this way, if indeed there will be $W^{l}\subset F_{\beta_{l}+m_{l},\beta_{l+1}}$, then for every distinct pairs $(w_{\gamma}^{l},\eta),(w_{\gamma'}^{l},\eta')\in W^{l}\times Q_{l}$ we will have $w_{\gamma}^{l}H_{\beta_{l+1}}\eta\cap w_{\gamma}^{l}H_{\beta_{l+1}}\eta=\emptyset$ and $S_{H_{\beta_{l}}}(x)|_{w_{\gamma}^{l} q H_{\beta_{l}}}= *$, since $w_{\gamma}^{l}\in H_{\beta_{l}+m_{l}}$
	
	Choose $n_{l}\in \mathbb{N}$ to be the least number such that $g_{l}$ lies in a hole of $S_{H_{\beta_{l}}}(x)$, rearranging numeration we can assume that $g_{l}\in  s_1  H_{\beta_{l}}$. Put $\tilde{\beta}_{l}:=\beta_{l}+m_{l}$.

	We define $S_{H_{\tilde{\beta}_{l}+1}}(x)$ by choosing $k$ cosets $ q_{\gamma_{1}}^{1}H_{\tilde{\beta}_{l}+1}s_1\subset   H_{\tilde{\beta}_{l}}s_1,$ for ${\gamma_{1}}\in[1,k]$ and placing there respectively ${\gamma_{1}}$, where $q^{1}_{\gamma_{1}}\in F_{\tilde{\beta}_{l},\tilde{\beta}_{l}+1}$, then we colour $g_{n_{l}}H_{\tilde{\beta}_{l}+1}$ whatever we like. Moreover, the cosets  
	\[ q_{\gamma_{1}}^{1}H_{\tilde{\beta}_{l}+1}s_2,  q_{\gamma_{1}}^{1}H_{\tilde{\beta}_{l}+1}s_3 ,\ldots, q_{\gamma_{1}}^{1}H_{\tilde{\beta}_{l}+1}s_{a_{\tilde{\beta}_{l}}},q_{\gamma_{1}}^{1}H_{\tilde{\beta}_{l}+1}\eta, \]
	for ${\gamma_{1}}\in[1,k]$ and $\eta\in Q_{l}$, must remain holes in this step. Note that $sH_{\tilde{\beta}_{l}}\cap \eta H_{\tilde{\beta}_{l}}=\emptyset$ for every $\eta\in Q_{l}$ and $s\in \Wee_{l}$, since $Q_{l}\cup\Wee_{l}\subset F_{\beta_{l}+m_{l}}$ and $Q_{l}\cap \Wee_{l}\subset Q_{l}\cap F_{\beta_{l}}=\emptyset$.  In this way, we demand that there are $k |Q_{l}|+k|\Wee_{l}|+1\leq 2k|F_{\tilde{\beta}_{l}}|<\omega_{\tilde{\beta}_{l}+1}$ $H_{\tilde{\beta}_{l}+1}$-holes, which is possible, since $\omega_{\tilde{\beta}_{l}+1}<a_{\tilde{\beta}_{l}+1}$. Notice also, that we have coloured $k$ $H_{\tilde{\beta}_{l}+1}$-cosets. Therefore, we choose some other $H_{\tilde{\beta}_{l}+1}$-cosets and place there $1$, so that in this turn we have $a_{\tilde{\beta}_{l}+1}$ $H_{\tilde{\beta}_{l}+1}$-holes. 
	
	We proceed with the second step. We will define $S_{H_{\tilde{\beta}_{l}+2}}(x)$. We choose some $k^2$ cosets \[ q_{\gamma_{2}}^{2}q_{\gamma_{1}}^{1}H_{\tilde{\beta}_{l}+2}s_2 \subset  q_{\gamma_{1}}^{1}H_{\tilde{\beta}_{l}+1}s_2\subset   H_{\tilde{\beta}_{l}}s_2 \] from	 $\per_{H_{\tilde{\beta}_{l}+2}}(S_{H_{\tilde{\beta}_{l}+2}}(x),*)$ for some $q_{\gamma_{2}}^{2}\in F_{\tilde{\beta}_{l}+1,\tilde{\beta}_{l}+2} $, where $\gamma_{1} ,\gamma_{2} \in [1,k]$, and place there $\gamma_{2}$. Moreover, the cosets \[ q_{\gamma_{2}}^{2}q_{\gamma_{1}}^{1} H_{\tilde{\beta}_{l}+2}s_t,q_{\gamma_{2}}^{2}q_{\gamma_{1}}^{1}H_{\tilde{\beta}_{l}+2}\eta, \] for $\gamma_{2},\gamma_{1}\in[1,k], t\in[3,a_{\beta_{l}}]$ and for every $\eta\in Q_{l}$, must remain holes. Again, as before we have demanded, that $k^{2}(|\Wee_{l}|-1)+k^{2}|Q_{l}|\leq 2k^{2}|F_{\tilde{\beta}_{l}}|<\omega_{\tilde{\beta}_{l}+2}$ cosets must be $H_{\tilde{\beta}_{l}+2}$-holes, it is possible by the inequality $\omega_{\tilde{\beta}_{l}+2}<a_{\tilde{\beta}_{l}+2}$. In this step, we have coloured $k^2$ cosets, hence we choose some cosets and place there $2$, so that we have $a_{\tilde{\beta}_{l}+2}$ holes.	
	
	In the $j$-th step, for $j\leq a_{\beta_{l}}$, we choose from $\per_{H_{\tilde{\beta}_{l}+j}}(S_{H_{\tilde{\beta}_{l}+j}}(x),*)$ some $k^j$ cosets \[ q_{\gamma_{j}}^{j}q_{\gamma_{j-1}}^{j-1}\ldots q_{\gamma_{1}}^{1} H_{\tilde{\beta}_{l}+j}s_{j}\subset  q_{\gamma_{j-1}}^{j-1}\ldots q_{\gamma_{1}}^{1}   H_{\tilde{\beta}_{l}+j-1}s_{j}\subset  H_{\tilde{\beta}_{l}}s_{j} \] with $q_{\gamma_{f}}^{j} \in F_{\tilde{\beta}_{l}+j-1,\tilde{\beta}_{l}+j}$ and we place there $\gamma_{j}$ for every $\gamma_{j}\in[1,k]$. Moreover, the cosets 
	\[ 	q_{\gamma_{j}}^{j} q_{\gamma_{j-1}}^{j-1} \ldots q_{\gamma_{1}}^{1}  H_{\tilde{\beta}_{l}+j}s_{t},q_{\gamma_{j}}^{j} q_{\gamma_{j-1}}^{j-1} \ldots q_{\gamma_{1}}^{1}H_{\tilde{\beta}_{l}+j}\eta, \] for every $t\in[j+1,a_{\beta_{l}}]$, $\gamma_{b}\in[1,k]$, for $b=1,...,j$, and for every $\eta\in Q_{l}$, must be omitted. Note that, we have coloured $k^j$ cosets, which is possible and we demanded that \[ k^{j}(a_{\beta_{l}}-j-1)+k^{j}|Q_{l}|\leq 2k^{\tilde{\beta}_{l}+j}|F_{\tilde{\beta}_{l}+j}|\leq\omega_{\tilde{\beta}_{l}+j} <a_{\tilde{\beta}_{l}+j} \] cosets must remain $H_{\tilde{\beta}_{l}+j}$-holes. We choose, additionally, some cosets and place there $j \mod k$, so that in this turn we have  $a_{\tilde{\beta}_{l}+j}$ $H_{\tilde{\beta}_{l}+j}$-holes. Denote $w_{\gamma}^{l}=   q_{\gamma_{a_{\beta_{l}}}}^{a_{\beta_{l}}}q_{\gamma_{a_{\beta_{l}}}}^{a_{\beta_{l}}-1}\ldots q_{\gamma_{1}}^{1}  \in F_{\beta_{l}+m_{l},\beta_{l+1}}$, for $\gamma \in k^{a_{\beta_{l}}}$.
	
	We now consider $k^{\beta_{l}}$ as $k^{\mathcal{W}_{l}}$, where $\mathcal{W}_{l}=\{s_j\}_{j=1}^{a_{\beta_{l}}}\subset F_{a_{\beta_{l}}}$, via identification $j\mapsto s_{j}\in F_{a_{\beta_{l}}}$. It follows directly from construction that $((w_{\gamma}^{l})^{-1}x)|_{\mathcal{W}_{l}}=\gamma\in k^{\mathcal{W}_{l}}$.
	
	In this way, we can carry on with induction to the $a_{\beta_{l}}$-step. Note that in every step we have omitted cosets $w_{\gamma}^{l}H_{\beta_{l+1}}\eta$, for every $\gamma \in k^{a_{\beta_{l}}}$ and $\eta\in Q_{l}$. Since, as we have noted before, for every distinct pairs $(w_{\gamma}^{l},\eta),(w_{\gamma'}^{l},\eta')\in W^{l}\times Q_{l}$ holds $w_{\gamma}^{l}H_{\beta_{l+1}}\eta\cap w_{\gamma'}^{l}H_{\beta_{l+1}}\eta'=\emptyset$, we can colour aforementioned cosets independently. Therefore, put $S_{H_{\beta_{l+1}}}(x)|_{w_{\gamma}^{l}sH_{\beta_{l+1}}}=\gamma(s)$ on $w_{\gamma}^{l}\eta H_{\beta_{l+1}}$ if $sH_{\beta_{l}}=\eta H_{\beta_{l}}$ for $\eta\in Q_{l}$ and $s\in F_{\beta_{l}}$. It is not hard to see, that in the last step, where we define $S_{H_{\beta_{l+1}}}(x)$ we will colour $k^{a_{\beta_{l+1}}}$ $H_{\beta_{l+1}}$-cosets. Therefore adding to that cosets $w_{\gamma}^{l}H_{\beta_{l+1}}\eta$, for $\gamma \in k^{a_{\beta_{l}}}$ and $\eta\in Q_{l}$ we will colour at most $2k^{a_{\beta_{l+1}}}|F_{\beta_{l}}|\leq k^{a_{\beta_{l+1}}}|F_{\beta_{l+1}}|$ cosets, which is possible. Hence, also in this last step we can demand that we have exactly $a_{\beta_{l+1}}$  $H_{\beta_{l+1}}$-holes.
	
	Procedure of choosing the least element from $G$ which has not been chosen yet reassures us that the constructed element $x\in k ^{G}$ is a Toeplitz element. We are ready to compute sofic entropy of $(\overline{Gx},G)$. We set $X=\overline{Gx}$. For every $n\in\nat$, substitute $K_{n}=H_{\beta_{n}}$, analogously consider $L_{n}= F_{\beta_{n}}$. We have to consider a new sofic approximation $\tilde{\Sigma}=\{\tilde{\sigma}_{n}:=\sigma_{\beta_{n}}\colon G\to\Sym(G/K_{n})\}_{n\in\nat}$, which is equivalent to $\Sigma$. 
	
	We will compute sofic entropy with respect to $\tilde{\Sigma}$. Note that by Theorem \ref{main}, since $\#\per _{K_n}(x,*)=a_{\beta_{n}}$ and $[G:K_{n}]=[G:H_{\beta_{n}}]$, we have $h_{\tilde{\Sigma}}(\overline{Gx},G)\leq \lim_{n\to\infty}\frac{a_n}{[G:H_n]}\log k$. It remains to prove the reverse inequality. To do so we will use more general Definition \ref{entropy-def1}, as it is more suitable in this situation.
	
	Let $\epsilon>0$. Fix a finite set $F\subset G$, $\delta>0$ and $n\in\nat$ big enough so that $F\subset L_{n}$. For $\gamma\in k^{\Wee_{n}}$ let us define $\phi_{\gamma}\colon G/K_{n}\to X$ by the formula $\phi_{\gamma}(i,g K_{n})=g^{-1}(w_{\gamma}^{n})^{-1}x$, where $g\in L_{n}$. We will show that $\phi_{\gamma}$ is an $(F\cup F^{-1},\delta)$-pseudoorbit. Fix $\gamma\in k^{\Wee_{n}}$, and $g\in L_{n}$, $s\in F$. Assume that $gs\in L_{n}=F_{\beta_{n}}$. Let us compute
	\begin{align*}
	\phi_{\gamma}\tilde{\sigma}_{s^{-1}}(gK_{n})=\phi_{\gamma}(gsK_{n})=s^{-1}g^{-1}(w_{\gamma}^{n})^{-1}x=s^{-1}\phi_{\gamma}(i,gK_{n}),
	\end{align*}hence, in this case, $d(\phi_{\gamma}\tilde{\sigma}_{s^{-1}}(gK_{n}),s^{-1}\phi_{\gamma}(gK_{n}))=0$. On the other hand, let $gs\notin L_{n}$, then $gs\in Q_{n}$ and 
	\[ (s^{-1}\phi_{\gamma}(i,gK_{n}))|_{K_{n+1}}=(s^{-1}g^{-1}(w_{\gamma}^{n})^{-1}x)|_{K_{n+1}}=x|_{w_{\gamma}^{n}gsK_{n+1}}=x|_{w_{\gamma}^{n}pK_{n+1}}, \]
	where $p\in L_{n}$ satisfies $pK_{n}=gsK_{n}$. In the last equality, we have used that $gs\in Q_{n}$. Now, since 
	\[ \phi_{\gamma}\tilde{\sigma}_{s^{-1}}(gK_{n})|_{K_{n+1}}=\phi_{\gamma}({pK_{n})}|_{K_{n+1}}=p^{-1}({w_{\gamma}^{n}})^{-1}x|_{K_{n+1}}=x|_{w_{\gamma}^{n}pK_{n+1}}, \]we conclude that $d(\phi_{\gamma}\tilde{\sigma}_{s^{-1}}(gK_{n}),s^{-1}\phi_{\gamma}(gK_{n}))=0$. Summing up, since $g\in L_{n}$ and $s\in F$ were arbitrary, for any $\gamma\in k^{\Wee_{n}}$ we have $d_{2}(\phi_{\gamma}\tilde{\sigma}_{s^{-1}},s^{-1}\phi_{\gamma})=0$ and $\phi$ is an $(F\cup F^{-1},\delta)$-pseudoorbit. Note that for distinct $\gamma,\gamma'\in k^{\Wee_{n}}$ their respective pseudoorbits $\phi_{\gamma}$ and $\phi_{\gamma'}$ are $(\epsilon,d_{\infty})$-separated. Indeed, let $\gamma'(s)\neq\gamma(s)$ for some $s\in\Wee_{n}$, then \[ \phi_{\gamma}(sK_{n})|_{K_{n+1}}=x_{w_{\gamma}^{n}K_{n+1}s}=\gamma(s) \] and
	\[\phi_{\gamma'}(sK_{n})|_{K_{n+1}}=x_{w_{\gamma'}^{n}K_{n+1}s}=\gamma'(s), \]so $d_{\infty}(\phi_{\gamma},\phi_{\gamma'})\geq1$.
	As a consequence $\{\phi_{\gamma}\}_{\gamma\in k^{\Wee_{n}}}\subset \map(d,F\cup F^{-1},\delta,\tilde{\sigma}_{n})$ is an $(\epsilon,d_{\infty})$-separated set for every $\epsilon>0$. We can finally estimate
	\[ 
	k^{a_{\beta_{n}}}=k^{|\Wee_{n}|} \leq N_{\epsilon}(\map(d,F\cup F^{-1},\delta,\tilde{\sigma}_{n}),d_{\infty})\leq N_{\epsilon}(\map(d,F,\delta,\tilde{\sigma}_{n}),d_{\infty}),
	 \]taking the exponential growth of both sides and applying the appropriate limits, we obtain 
	 \[ 
	 \lim_{n\to\infty}\frac{a_{n}}{[G:H_{n}]}\log k=\lim_{n\to\infty}\frac{a_{\beta_{n}}}{[G:H_{\beta_{n}}]}\log k=\lim_{n\to\infty}\frac{a_{\beta_{n}}}{|L_{n}|}\log k \leq h_{\tilde{\Sigma}}(X,G)=h_{\Sigma}(X,G),
	  \]which proves the theorem.\qedhere
	
\end{proof}
At last we proceed to the Krieger's Theorem. The amenable counterpart, proved by direct construction, can be found in \cite{KriegerTheorem}. On the other hand, indirect proof is presented in \cite{MM2018}.
\begin{thm}\label{krieger-rf}
	Let $G$ be a residually finite group and $H_{n}\searrow\{e\}$ be a decreasing sequence of finitely indexed normal subgroups of $G$. Then for every $k\in\mathbb{N}$ bigger then 1 and every number $\kappa \in [0,1)$ there exists a Toeplitz element $x\in k^{G}$ with  $h_{\Sigma}(\overline{Gx},G) = \kappa \log k$, where $\Sigma=\{\sigma_{n}\}_{n\in\nat}$ is a sofic approximation sequence defined by the natural action on cosets of $H_{n}$ for every $n\in\nat$.
\end{thm}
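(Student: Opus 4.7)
The plan is to reduce the theorem to Theorem \ref{toeplitz-example} by exhibiting, for each $\kappa\in[0,1)$, a sequence $\{a_n\}_{n\in\nat}$ of positive integers that satisfies its hypotheses with limiting ratio exactly $\kappa$.

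The case $\kappa=0$ is trivial: take $x\in k^G$ to be any constant sequence; then $\per_{H_n}(x)=G$ for every $n$, so $x$ is Toeplitz, and $\overline{Gx}=\{x\}$ has entropy zero.

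For $\kappa\in(0,1)$, I assume without loss of generality that $\{H_n\}$ is strictly decreasing (consecutive duplicates give identical $\sigma_n$ and hence do not affect $h_\Sigma$, which is a $\limsup$), so that $m_n:=[H_{n-1}:H_n]\geq 2$ and $M_n:=[G:H_n]\geq 2^{n-1}M_1\to\infty$. I then pick $N$ so that $M_N>2/(1-\kappa)$ and, restricting attention to the tail $\{H_n\}_{n\geq N}$ (which again preserves $h_\Sigma$), define
\[
a_n:=\lfloor\kappa M_n\rfloor+2\qquad\text{for every }n\geq N.
\]
Then $0<a_n<M_n$ and $a_n/M_n=\kappa+O(1/M_n)\to\kappa$. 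The only nontrivial condition to verify is the strict descent $a_n<a_{n-1}m_n$. Writing $\beta_n:=\{\kappa M_{n-1}\}\in[0,1)$ and using the identity $\lfloor\kappa M_n\rfloor=\lfloor\kappa M_{n-1}\rfloor m_n+\lfloor\beta_n m_n\rfloor$ together with the bound $\lfloor\beta_n m_n\rfloor\leq m_n-1$, I compute
\[
a_{n-1}m_n-a_n=2m_n-\lfloor\beta_n m_n\rfloor-2\geq m_n-1\geq 1,
\]
so $a_n<a_{n-1}m_n$ strictly.

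With all hypotheses of Theorem \ref{toeplitz-example} verified (with $\theta=\kappa<1$), that theorem immediately produces a Toeplitz element $x\in k^G$ such that $h_\Sigma(\overline{Gx},G)=\kappa\log k$, where $\Sigma$ is the natural sofic approximation on the cosets of $H_n$. The main obstacle in the argument is the construction of $\{a_n\}$: the additive shift $+2$ is calibrated precisely so that the strict inequality survives the worst-case rounding $\lfloor\beta_n m_n\rfloor=m_n-1$, and this is exactly where the standing hypothesis $m_n\geq 2$ is used. Once Theorem \ref{toeplitz-example} is available, the Krieger theorem for residually finite groups reduces to this elementary arithmetic verification.
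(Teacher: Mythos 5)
Your proof is correct and follows essentially the same route as the paper: both reduce the statement to Theorem \ref{toeplitz-example} by choosing $a_n\approx\kappa[G:H_n]$ and checking its hypotheses. The only difference is a minor refinement — the paper takes $a_n=\lfloor\kappa[G:H_n]\rfloor+1$ and passes to a subsequence to secure $a_n<a_{n-1}[H_{n-1}:H_n]$, whereas your $+2$ shift lets you verify that inequality directly on the (tail of the) original sequence.
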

\begin{proof}
	For $n\in\mathbb{N}$, consider division of $[0,1)$ into finite number of disjoint intervals, each of length $1/[G:H_n]$. Find such an $a_n\in\mathbb{N}$, that $\kappa \in [\frac{a_n-1}{[G:H_n]},\frac{a_n}{[G:H_n]})$. Define in this way sequence $\{a_n\}_{n=1}^{\infty}$. If necessary, we can take subsequence of $\{a_n\}_{n=1}^{\infty}$ and consider it with a proper subsequence of groups, so that $a_n<a_{n-1}[H_{n-1}:H_{n}]$ is satisfied. Clearly, using Theorem \ref{toeplitz-example} we are able to construct a Toeplitz element $x\in k^{G}$, such that $h_{\Sigma}(\overline{Gx},G) = \kappa \log k$.
\end{proof}
\textbf{Acknowledgements} This article was an author's master degree thesis. It was supervised by Dominik Kwietniak, who put a lot of effort to make it readable, for which we are utterly grateful.
\bibliography{mybib}

\begin{thebibliography}{ABL19}

\bibitem[ABL19]{DBF2019}
Dylan {Airey}, Lewis {Bowen}, and Frank {Lin}.
\newblock {A topological dynamical system with two different positive sofic
  entropies}.
\newblock \url{https://arxiv.org/pdf/1911.08272.pdf}, 2019.
\newblock Accessed on 11.07.2020.

\bibitem[Aus16]{Austin2016}
Tim Austin.
\newblock Additivity properties of sofic entropy and measures on model spaces.
\newblock {\em Forum of Mathematics, Sigma}, 4, 2016.

\bibitem[Bow]{Bowen2017}
Lewis Bowen.
\newblock A brief introduction to sofic entropy theory.
\newblock \url{https://arxiv.org/pdf/1711.02062.pdf}.
\newblock Accessed on 11.07.2020.

\bibitem[Bow19]{Bowen2019}
Lewis Bowen.
\newblock Examples in the entropy theory of countable group actions.
\newblock {\em Ergodic Theory and Dynamical Systems}, page 1–88, 2019.

\bibitem[KL13]{KerrLi}
David Kerr and Hanfeng Li.
\newblock {\em Ergodic Theory. Independence and Dichotomies}.
\newblock Springer International Publishing, 2013.

\bibitem[Kri]{Krieger}
Fabrice Krieger.
\newblock Toeplitz subshifts and odometers for residually finite groups.
\newblock
  \url{https://www.researchgate.net/publication/265563834_TOEPLITZ_SUBSHIFTS_AND_ODOMETERS_FOR_RESIDUALLY_FINITE_GROUPS}.
\newblock Accessed on 11.07.2020.

\bibitem[Kri07]{KriegerTheorem}
Fabrice Krieger.
\newblock Sous-décalages de toeplitz sur les groupes moyennables
  résiduellement finis.
\newblock {\em Journal of The London Mathematical Society-}, 75:447--462, 2007.

\bibitem[LS18]{MM2018}
Martha Lacka and Marta Straszak.
\newblock Quasi-uniform convergence in dynamical systems generated by an
  amenable group action.
\newblock {\em Journal of the London Mathematical Society}, 98(3):687--707,
  2018.

\bibitem[Lö17]{geom-group-theory}
Clara Löh.
\newblock {\em Geometric Group Theory: An Introduction}.
\newblock Springer International Publishing, 1st edition, 2017.

\end{thebibliography}
\bibliographystyle{alpha}

\end{document}